    \def\tank#1{\mathbb Protected@xdef\@thanks{\@thanks
     \mathbb Protect\footnotetext[0]{#1}}}
    \def\bigfoot{

     \@footnotetext}
    \newcommand{\ea}{\end{array}}
    \numberwithin{equation}{section}
    \newtheorem{theorem}{Theorem}[section]
    \newtheorem{lemma}{Lemma}[section]
    \newtheorem{proposition}[theorem]{Proposition}
    \newtheorem{corollary}[theorem]{Corollary}
    \def\beq{\begin{equation}}
    \def\nneq{\end{equation}}
    \def\bthm{\begin{theorem}}
    \def\nthm{\end{theorem}}
    \def\blem{\begin{lemma}}
    \def\nlem{\end{lemma}}
    \def\bprf{\begin{proof}}
    \def\nprf{\end{proof}}
    \def\bprop{\begin{prop}}
    \def\nprop{\end{prop}}
    \def\brmk{\begin{rem}}
    \def\nrmk{\end{rem}}
    \def\bexa{\begin{exa}}
    \def\nexa{\end{exa}}
    \def\bcor{\begin{cor}}
    \def\ncor{\end{cor}}
        \newcommand{\E}{\mathbb E}
      \def\R{\mathbb{R}}
    \title[Local linearization  for  the SWE]{Local linearization for  estimating the diffusion parameter of  nonlinear stochastic   wave equations with spatially correlated noise}
         \date{}
\begin{document}

            \author[G.P. Liu]{Guoping Liu}  
\address[]{Guoping Liu, School of Mathematics and Statistics, 
Huazhong University of Science and Technology,  Wuhan, 430074, 
China.}
\email{liuguoping@hust.edu.cn} 

    \author[R. Wang]{Ran Wang}
    \address[]{Ran Wang, School of Mathematics and Statistics,  Wuhan University,  Wuhan, 430072,
    China.}
    \email{rwang@whu.edu.cn}

    \maketitle
     \noindent {\bf Abstract:}  We study the bi-parameter local linearization  of the one-dimensional nonlinear stochastic wave equation  driven by a Gaussian noise,   which  is white in time and has a spatially homogeneous covariance structure of Riesz-kernel type.   We establish that the second-order increments of the solution can be approximated by those of the corresponding linearized wave equation, modulated by the diffusion coefficient. These findings extend the previous results of Huang et al. \cite{HOO2024}, which addressed the case of space-time white noise. As applications, we analyze the quadratic variation of the solution and construct a consistent estimator for the diffusion parameter.
              \vskip0.3cm
 \noindent{\bf Keyword:} {Stochastic wave equation; local linearization;   quadratic variation; parameter estimation.}
 \vskip0.3cm

\noindent {\bf MSC: } {60H15; 60G17; 60G22.}
    \section{Introduction  }

Consider the non-linear stochastic wave equation (SWE, for short) in one spatial dimension:
      \begin{equation}\label{SWE}
\begin{cases}
\vspace{6pt}
\displaystyle{\frac{\partial^2}{\partial t^2} u(t, x) =\frac{\partial^2}{\partial x^2} u(t, x) +  F(u(t,x))\dot W(t, x), \quad t\ge 0, \, x \in \mathbb{R},}\\
\displaystyle{u(0, x) = 0, \quad \frac{\partial}{\partial t} u(0, x) =0.}
\end{cases}
\end{equation}
The diffusion coefficient   $F:\mathbb R\rightarrow\mathbb R$  is assumed to be  globally  Lipschitz continuous. The term  $\dot W$ is a centered  Gaussian field on a probability space $(\Omega, \mathcal F, \mathbb P)$, which is  white in time and 
 has a spatially homogeneous correlation of fractional type.   More precisely, the noise $\dot W$ is defined by a family of centered Gaussian random variables $\{W(\varphi), \, \varphi\in \mathcal D\}$, where $\mathcal D:=C_0^{\infty}([0,\infty\times \mathbb R))$ is the space of infinitely differentiable functions with compact support, with the covariance function  
\begin{align}
\begin{aligned}\label{Eq:noise_cov}
\E[W(\varphi) W(\psi)] 
& =  \int_{\R_+} dt \int_\R \mu(d\xi)\, \mathscr{F}(\varphi(t, \cdot))(\xi) \overline{\mathscr{F}(\psi(t, \cdot))(\xi)},
\end{aligned}
\end{align}
for all $\varphi, \psi \in\mathcal D$. Here,  $\mathscr{F}(\varphi(s, \cdot))(\xi)$ denotes the Fourier transform of the function $y \mapsto \varphi(s, y)$,  
\[ \mathscr{F}(\varphi(s, \cdot))(\xi): = \int_\R e^{-i\xi y} \varphi(s, y) dy.
\]
For any $H\in [1/2,1)$, the spectral measure $\mu_H$ is given by  
\begin{align}\label{eq constant}
\mu(d\xi) :=c_H |\xi|^{1-2H} d\xi, \ \   \  \ \text{with }\   c_H = \frac{\Gamma(2H+1)\sin (\pi H)}{2\pi}.
\end{align} 

  When $H=\frac12$, that is, $\dot W$ is the space-time Gaussian white noise,  the   existence  of
  a real-valued   solution to  \eqref{SWE} was studied in Walsh  \cite{Walsh86}.      When $H\in (\frac12,1)$,  the solution to   \eqref{SWE} was studied in  Dalang   \cite{D99} and the H\"older continuity was studied in Dalang and  Sanz-Sol\'e \cite{DS15}.   Huang et al. \cite{HOO2024} recently established a local linearization property for solutions to the  SWE  \eqref{SWE} with space-time white noise. Following this research, we generalize the approximation scheme developed in \cite{HOO2024} to the case of spatially colored noise in this paper.  
   
   To present the local linearization result, we first introduce the following stochastic heat equation (SHE, for short):
        \begin{equation}\label{SHE}
\begin{cases}
\vspace{6pt}
\displaystyle{\frac{\partial}{\partial t} X(t, x) =\frac{\partial^2}{\partial x^2} X(t, x) +   F(X(t,x)) \dot{W}(t, x), \quad t \ge 0, x \in \mathbb{R},}\\
\displaystyle{X(0, x) = 0.}
\end{cases}
\end{equation} 
It is well known that under  the Lipschitz continuity  of $F$, the solution to \eqref{SHE} exhibits a local  linearization property; see, e.g., \cite{FKM2015, H14}.  Specifically,   let    $Y$ denote the linearized version of $X$; that is,   $Y$ is the solution to the   SHE \eqref{SHE} with $F\equiv1$. Then, 
  \begin{equation}\label{SHE appro}
  X(t, x + \varepsilon) - X(t, x) = F(X(t, x)) \left\{Y(t, x + \varepsilon) - Y(t, x)\right\} + R_{\varepsilon}(t, x),
\end{equation}  
where  the remainder term $R_{\varepsilon}(t, x)$ tends to $0$   as $\varepsilon\to 0$ at a rate   faster than $Y(t, x + \varepsilon) -Y(t, x)$.   
The relation \eqref{SHE appro} implies that, for fixed $t>0$, the  local spatial  fluctuations  of    $X(t, \cdot)$ are essentially governed by those of  $Y(t, \cdot)$.   In other words,  ignoring precise regularity conditions,   \eqref{SHE appro} indicates  that $X(t, \cdot)$ is controlled by $Y(t, \cdot)$ in the sense of Gubinelli's theory of  controlled paths     \cite{G04}.  
  
A similar local linearization behavior holds for  temporal increments: for fixed $t>0$ and $x\in \mathbb R$, as $\varepsilon \downarrow 0$, the increment  $X(t+\varepsilon, x)-X(t, x)$ admits an analogous structure.  See \cite{KSXZ2013, HP15, WX2024, QWWX2025}. Those results provide a   quantitative framework for analyzing the local   structure   of   sample paths  of the solution, including properties such as     Khinchin's law of the iterated logarithm,  Chung's  law of the iterated logarithm,    quadratic variation  of the  process, and  small-ball probability estimates. See, e.g.,  \cite{CHKK19, DNP2025, Das2022, HK2017, KKM23}. 
  The local  linearization of spatio-temporal increments of   solutions  to nonlinear SHEs has been further  studied by  Hu and Lee in  \cite{HL2025}.

In \cite{HK2016}, Huang and Khoshnevisan  investigated  an analogous problem  for the  SWE with space-time white noise.  They showed that, in contrast to the case of the SHE,  the spatial increment of the  solution to  the SWE with initial data $(u_0, u_1) \equiv (0, 1)$ does not  exhibit local linearization  for fixed $t$.   Remarkably, by adopting a bi-parameter perspective, Huang  et al.   \cite{HOO2024}   revisited the local linearization problem  for  the SWE.   Specifically, they introduced  a new coordinate system $(\tau, \lambda)$ obtained by rotating the $(t, x)$-coordinates by $-45^\circ$.  That is, 
\begin{equation}\label{eq transform}
(\tau, \lambda): = \Big(\frac{t-x}{\sqrt{2}}, \frac{t+x}{\sqrt{2}}\Big) \quad \text{and} \quad (t, x) 
= \Big( \frac{\tau+\lambda}{\sqrt{2}}, 
\frac{-\tau+\lambda}{\sqrt{2}} \Big). 
\end{equation}
For $\tau>0$ and $\lambda\ge -\tau$,  define 
\begin{equation}\label{SWE 2}
v(\tau, \lambda) := u\left(\frac{\tau+\lambda}{\sqrt{2}}, \frac{-\tau+\lambda}{\sqrt{2}}\right). 
\end{equation}
  Let $U:=\{U(t,x)\}_{t\ge0, x\in \mathbb R}$ be the solution of the following  linear SWE in one spatial dimension:
      \begin{equation}\label{SWE linear}
\begin{cases}
\vspace{6pt}
\displaystyle{\frac{\partial^2}{\partial t^2}U(t, x) =\frac{\partial^2}{\partial x^2} U(t, x) +   \dot W(t, x), \quad t\ge 0, \, x \in \mathbb{R},}\\
\displaystyle{U(0, x) = 0, \quad \frac{\partial}{\partial t} U(0, x) = 0.}
\end{cases}
\end{equation}
 For any $\tau\ge 0$ and $\lambda\ge -\tau$,  denote 
\begin{equation}\label{SWE 3}
V(\tau, \lambda) := U\left(\frac{\tau+\lambda}{\sqrt{2}}, \frac{-\tau+\lambda}{\sqrt{2}}\right). 
\end{equation}
 For $\varepsilon \in \mathbb R$, define the difference operators $\delta_{\varepsilon}^{(j)}, j=1, 2$  by  
\begin{equation}\label{eq diff}
 \begin{split}
 \delta_{\varepsilon}^{(1)}f(\tau, \lambda):=&\, f(\tau+\varepsilon, \lambda)-f(\tau, \lambda),\\
   \delta_{\varepsilon}^{(2)}f(\tau, \lambda):=&\, f(\tau, \lambda+\varepsilon)-f(\tau, \lambda).
 \end{split}
 \end{equation} 
Define the remainder terms $R_{\varepsilon}^+(\tau, \lambda)$ and $R_{\varepsilon}^-(\tau, \lambda)$ as follows:  
 \begin{equation}\label{eq R}
\begin{split}
 R_{\varepsilon}^{\pm}(\tau, \lambda):=& \,\delta_{\pm\varepsilon}^{(1)}\delta_{\varepsilon}^{(2)}v(\tau, \lambda)-F(v(\tau, \lambda))\delta_{\pm\varepsilon}^{(1)}\delta_{\varepsilon}^{(2)}V(\tau, \lambda)\\
=&\, \left\{ v(\tau\pm\varepsilon, \lambda+\varepsilon)-v(\tau\pm\varepsilon, \lambda) - v(\tau, \lambda+\varepsilon)+v(\tau, \lambda)\right\}\\
& \,\, -F(v(\tau, \lambda))\left\{V(\tau\pm\varepsilon, \lambda+\varepsilon)-V(\tau\pm\varepsilon, \lambda)-V(\tau, \lambda+\varepsilon)+V(\tau, \lambda) \right\}.
\end{split}
\end{equation}
When $H=\frac12$,  it is established   in  Huang et al. \cite{HOO2024} that
 $$\left\| R_{\varepsilon}^{\pm}(\tau, \lambda)\right\|_{L^p(\Omega)}\le c(p)|\varepsilon|^{\frac{3}{2}},$$ 
 which    converges to $0$   as $\varepsilon\to 0$ at a rate   faster than 
 $$ \left\|Y(t, x + \varepsilon) -Y(t, x)\right\|_{L^p(\Omega)}\le c(p)  |\varepsilon|, $$
 where $c(p)\in(0,\infty)$.

The following  local linearization   result extends the previous work by  Huang et al. in \cite{HOO2024}. 
    \begin{theorem}\label{thm error 1} Assume that $H \in \left(\frac12, 1\right)$. For any $p \ge 1$, there exists a constant $c(p)>0$ such that 
\begin{align}\label{eq error}
\left\| R_{\varepsilon}^{\pm}(\tau, \lambda)\right\|_{L^p(\Omega)} \le c(p)\varepsilon^{2H+\frac{1}{2}},
\end{align}
holds uniformly for all $\tau > 0$, $\lambda \ge -\tau$, and for all sufficiently small $\varepsilon > 0$.
\end{theorem}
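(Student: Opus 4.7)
\emph{Strategy and reduction.}
The plan is to express $R_\varepsilon^\pm(\tau,\lambda)$ as a stochastic integral over a small diamond $D^\pm_\varepsilon\subset\R_+\times\R$ of area $O(\varepsilon^2)$ in the original $(s,y)$-plane, then control it via Dalang's BDG inequality for spatially homogeneous noise together with the $H$-Hölder regularity of $u$. Starting from the mild formulation
\[
u(t,x)=\int_0^t\!\!\int_{\R}G(t-s,x-y)F(u(s,y))\,W(ds,dy),\qquad G(t,x)=\tfrac12\mathbf{1}_{\{|x|\le t\}},
\]
the rotation~\eqref{eq transform} maps the triangular support of $G$ onto the corner $\Delta(\tau,\lambda):=\{(\tau',\lambda'):\tau'\le\tau,\,\lambda'\le\lambda,\,\tau'+\lambda'\ge 0\}$. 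The four integrals occurring in $\delta_{\pm\varepsilon}^{(1)}\delta_{\varepsilon}^{(2)}v$ and in $\delta_{\pm\varepsilon}^{(1)}\delta_{\varepsilon}^{(2)}V$ then telescope by inclusion--exclusion to the integral over the small rectangle swept out in $(\tau,\lambda)$; pulling this back via~\eqref{eq transform} gives a diamond $D^\pm_\varepsilon$ localized near $(t,x)$, and up to a sign (immaterial for the $L^p$ bound)
\[
R_\varepsilon^\pm(\tau,\lambda)=\tfrac12\int_{D^\pm_\varepsilon}\bigl[F(u(s,y))-F(u(t,x))\bigr]\,W(ds,dy).
\]
For $R_\varepsilon^+$ we have $D^+_\varepsilon\subset\{s\ge t\}$ and the integrand is $\mathcal{F}_s$-adapted. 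For $R_\varepsilon^-$, $D^-_\varepsilon$ extends into $\{s<t\}$, so we anchor at $t_0:=t-\varepsilon/\sqrt 2$ (the minimum of $s$ on $D^-_\varepsilon$) and split
\[
R_\varepsilon^-=\tfrac12\!\int_{D^-_\varepsilon}\!\bigl[F(u(s,y))-F(u(t_0,x))\bigr]W(ds,dy)+\tfrac12\bigl[F(u(t_0,x))-F(u(t,x))\bigr]W(D^-_\varepsilon),
\]
with the first summand now a genuine Walsh integral. The boundary case $\lambda=-\tau$ is handled by truncating $D^-_\varepsilon$ at $s=0$, which only shrinks the integration domain.

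\emph{BDG, Hölder regularity, and geometric integration.}
For $p\ge 2$, Dalang's BDG inequality combined with Minkowski's integral inequality gives, for any adapted integrand $Z$ supported on a set $A$,
\[
\Bigl\|\int_A Z\,dW\Bigr\|_{L^p(\Omega)}^2\le c_p\int ds\iint\mathbf{1}_{A_s}(y)\mathbf{1}_{A_s}(y')\,\|Z(s,y)\|_{L^p}\|Z(s,y')\|_{L^p}\,|y-y'|^{2H-2}\,dy\,dy'.
\]
The Lipschitz property of $F$ combined with the standard $H$-Hölder regularity of $u$ in both time and space (cf.~\cite{DS15}, the nonlinear version following by the same BDG + wave Green's function estimate) yields $\|F(u(s,y))-F(u(t_\star,x))\|_{L^p}\le C(p)\varepsilon^H$ uniformly over $(s,y)\in D^\pm_\varepsilon$, with $t_\star=t$ or $t_0$ as appropriate. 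The time slice $D^\pm_{\varepsilon,s}$ is an interval whose length vanishes linearly at the two endpoints of the $O(\varepsilon)$-long time window and peaks at $\sim\varepsilon$ in the middle; a scaling change of variables gives $\iint_{D^\pm_{\varepsilon,s}\times D^\pm_{\varepsilon,s}}|y-y'|^{2H-2}\,dy\,dy'=c_H|D^\pm_{\varepsilon,s}|^{2H}$, and the $ds$-integration contributes a factor $\sim\varepsilon^{2H+1}$. Combining, the Walsh integral is bounded in $L^p$-squared by $C_p\,\varepsilon^{2H}\cdot\varepsilon^{2H+1}=C_p\,\varepsilon^{4H+1}$, i.e.\ $\lesssim\varepsilon^{2H+1/2}$ in $L^p$. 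The residual Gaussian product $[F(u(t_0,x))-F(u(t,x))]W(D^-_\varepsilon)$ is handled by Cauchy--Schwarz using $\|F(u(t_0,x))-F(u(t,x))\|_{L^{2p}}\lesssim\varepsilon^H$ and $\|W(D^-_\varepsilon)\|_{L^{2p}}\sim\|W(D^-_\varepsilon)\|_{L^2}\sim\varepsilon^{H+1/2}$ (Gaussian moment equivalence), producing the same $\varepsilon^{2H+1/2}$ rate. The case $p\in[1,2)$ follows from Jensen.

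\emph{Main obstacle.}
The decisive step is the reduction: the exact bi-parameter cancellation that exhibits $R_\varepsilon^\pm$ as a stochastic integral over $D^\pm_\varepsilon$ with \emph{integrand} $F(u(s,y))-F(u(t,x))$, rather than merely $F(u(s,y))$. It is this subtraction inside the integrand that, through Hölder continuity, supplies the extra factor $\varepsilon^H$ beyond the baseline $\varepsilon^{H+1/2}$ coming from the Gaussian size of $W(D^\pm_\varepsilon)$ alone. The chief technical nuisance is the adaptedness failure for $R_\varepsilon^-$, circumvented by the anchoring trick above; the boundary case $\lambda=-\tau$ and the promotion of the $L^2$ calculation to $L^p$ via Minkowski inside Dalang's BDG framework are routine.
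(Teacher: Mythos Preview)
Your proposal is correct and follows essentially the same approach as the paper: represent $R_\varepsilon^\pm$ as a stochastic integral of $F(u(s,y))-F(u(\text{anchor}))$ over the small diamond, then combine the BDG inequality for the spatially homogeneous noise, the $H$-H\"older regularity of $u$, and the explicit Riesz-kernel integral over the time slices to obtain the bound $\varepsilon^{4H+1}$ for the squared $L^p$-norm. The only bookkeeping differences are that the paper splits the diamond for $R_\varepsilon^+$ into two triangles $D_1,D_2$ (producing three pieces $I_1,I_2,I_3$ anchored at $P_1$ and $P_2$) and then invokes ``symmetry'' for $R_\varepsilon^-$, whereas you keep $D_\varepsilon^+$ in one piece anchored at $P_1=(t,x)$ and treat $R_\varepsilon^-$ explicitly via the anchoring-at-$t_0$ device---a slightly leaner variant of the same argument.
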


  As an immediate consequence of Theorem \ref{thm error 1}, we are thus led to the following bi-parameter local linearization result for the solution of Eq. \eqref{SWE} in the original coordinates. 
  
  \begin{theorem}\label{thm error 2} Assume that $H \in \left(\frac12, 1\right)$, and let $u$ and $U$ be the solutions to equations \eqref{SWE 2} and \eqref{SWE linear}, respectively. Then, for any $(t,x) \in \mathbb R_+ \times \mathbb R$ and any $p \ge 1$, it holds that
\begin{equation}\label{eq error 2}
\begin{split}
&\left\| \Delta_{\varepsilon}^{(1)} u(t, x) - F(u(t,x)) \Delta_{\varepsilon}^{(1)} U(t, x) \right\|_{L^p(\Omega)} \\
&\quad + \left\| \Delta_{\varepsilon}^{(2)} u(t, x) - F(u(t,x)) \Delta_{\varepsilon}^{(2)} U(t, x) \right\|_{L^p(\Omega)} \le c(p, t, x)   \varepsilon^{2H + \frac{1}{2}},
\end{split}
\end{equation}
uniformly for all sufficiently small $\varepsilon > 0$. Here, the difference operators $\Delta{\varepsilon}^{(1)}$ and $\Delta_{\varepsilon}^{(2)}$ are defined for a function $f(t, x)$ as follows:
\begin{align*}
\Delta_{\varepsilon}^{(1)}f(t,x)=&\, f(t,x+2\varepsilon)-f(t-\varepsilon, x+\varepsilon)-f(t+\varepsilon, x+\varepsilon)+f(t,x),\\
\Delta_{\varepsilon}^{(2)}f(t,x)=&\, f(t+2\varepsilon, x)-f(t+\varepsilon, x-\varepsilon)-f(t+\varepsilon, x+\varepsilon)+f(t,x).
\end{align*} 
\end{theorem}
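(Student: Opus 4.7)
The plan is to reduce Theorem~\ref{thm error 2} directly to Theorem~\ref{thm error 1} by undoing the $-45^\circ$ rotation \eqref{eq transform} that defines $v$ and $V$. Since $v(\tau,\lambda)=u(t,x)$ and $V(\tau,\lambda)=U(t,x)$ at corresponding points, and $F(v(\tau,\lambda))=F(u(t,x))$, the only real work is to identify the four-point increments $\Delta_\varepsilon^{(i)}$ in the $(t,x)$-coordinates with the mixed increments $\delta_{\pm\varepsilon}^{(1)}\delta_\varepsilon^{(2)}$ used to define $R_\varepsilon^\pm$ in \eqref{eq R}.

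First I would verify the following point-wise identification. Fix $(t,x)\in\R_+\times\R$ and let $(\tau,\lambda)$ be given by \eqref{eq transform}. Direct substitution of the transformation law into the four points $(t+2\varepsilon,x)$, $(t+\varepsilon,x-\varepsilon)$, $(t+\varepsilon,x+\varepsilon)$, $(t,x)$ appearing in $\Delta_\varepsilon^{(2)} u(t,x)$ shows that their $(\tau,\lambda)$-coordinates are, respectively, $(\tau+\sqrt 2\varepsilon,\lambda+\sqrt 2\varepsilon)$, $(\tau+\sqrt 2\varepsilon,\lambda)$, $(\tau,\lambda+\sqrt 2\varepsilon)$, $(\tau,\lambda)$; likewise, the four points of $\Delta_\varepsilon^{(1)} u(t,x)$ map to $(\tau-\sqrt 2\varepsilon,\lambda+\sqrt 2\varepsilon)$, $(\tau-\sqrt 2\varepsilon,\lambda)$, $(\tau,\lambda+\sqrt 2\varepsilon)$, $(\tau,\lambda)$. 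Matching signs with \eqref{eq diff} then gives
\begin{equation*}
\Delta_\varepsilon^{(2)} u(t,x)=\delta_{\sqrt 2\varepsilon}^{(1)}\delta_{\sqrt 2\varepsilon}^{(2)} v(\tau,\lambda), \qquad \Delta_\varepsilon^{(1)} u(t,x)=\delta_{-\sqrt 2\varepsilon}^{(1)}\delta_{\sqrt 2\varepsilon}^{(2)} v(\tau,\lambda),
\end{equation*}
together with the two analogous identities with $(u,v)$ replaced by $(U,V)$.

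Substituting these identities into the definition \eqref{eq R} yields
\begin{equation*}
\Delta_\varepsilon^{(2)} u(t,x)-F(u(t,x))\Delta_\varepsilon^{(2)} U(t,x)=R_{\sqrt 2\varepsilon}^{+}(\tau,\lambda),
\end{equation*}
together with the analogous formula involving $\Delta_\varepsilon^{(1)}$ and $R_{\sqrt 2\varepsilon}^{-}$. Applying Theorem~\ref{thm error 1} at scale $\sqrt 2\varepsilon$ in place of $\varepsilon$, and absorbing the factor $2^{H/2+1/4}$ into the constant, then produces the bound \eqref{eq error 2}; in fact the constant may be taken independent of $(t,x)$. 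No step here presents a substantive obstacle---the entire argument is change-of-variables bookkeeping, and the only care required is to track the uniform $\sqrt 2$-rescaling of $\varepsilon$ and the signs in the four-point increments.
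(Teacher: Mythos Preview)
Your reduction is correct and is exactly what the paper has in mind: it states Theorem~\ref{thm error 2} as ``an immediate consequence of Theorem~\ref{thm error 1}'' via the rotation \eqref{eq transform}, and you have simply carried out the bookkeeping that the paper leaves implicit. Your identification of $\Delta_\varepsilon^{(1)}$ and $\Delta_\varepsilon^{(2)}$ with $\delta_{-\sqrt 2\varepsilon}^{(1)}\delta_{\sqrt 2\varepsilon}^{(2)}$ and $\delta_{\sqrt 2\varepsilon}^{(1)}\delta_{\sqrt 2\varepsilon}^{(2)}$ is accurate, and your observation that the constant can be taken independent of $(t,x)$ is also correct, since Theorem~\ref{thm error 1} is uniform in $(\tau,\lambda)$.
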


       For every $N\ge1$ and $0\le i\le N$, let
   $\tau_i:=\frac{i}{N}, \lambda_j:=\frac{j}{N}$.   Define the rescaled quadratic  variation of   the process $v$ given in \eqref{SWE 2}  by 
  \begin{align}\label{eq V v}
Q_{N}(v):= N^{2H-1}\sum_{i=0}^{N-1}\sum_{j=0}^{N-1}\Delta_{i,j}(v)^2,
\end{align} 
with
\begin{align}\label{eq V ij}
\Delta_{i,j}(v)  := v\left(\tau_{i+1}, \lambda_{j+1}  \right)-v\left(\tau_{i+1}, \lambda_{j}\right)-v\left(\tau_{i}, \lambda_{j+1}  \right) +v\left(\tau_{i}, \lambda_{j}    \right).
\end{align}

 We have  the following asymptotic behavior of the sequence $\{Q_{N}(v)\}_{ N\ge 1}$, as $N\rightarrow\infty$.
  The idea is to approximate the increment $\Delta_{i,j}(v)$  by $F(v(\tau_i, \lambda_j))\Delta_{i,j}(V)$, where $V$ is given in \eqref{SWE 3}.   
    \begin{proposition}\label{prop variation converge}  
  Assume $H\in \left[\frac12, 1\right)$. There exists a constant $c>0$ such that  for every  fixed   $N\ge 2$, 
  \begin{align}\label{eq quad}
  \mathbb E\left[\left| Q_{N}(v)-  2^{H-\frac{5}{2}} \int_0^1\int_0^1 F^2\big(v(\tau, \lambda) \big) d\tau d\lambda \right|  \right]\le c N^{- H}.
  \end{align} 
   \end{proposition}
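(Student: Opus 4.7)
The plan is to use Theorem~\ref{thm error 1} to replace each rectangular increment $\Delta_{i,j}(v)$ by $F(v(\tau_i,\lambda_j))\Delta_{i,j}(V)$ and then to carry out a Gaussian variance-plus-Riemann-sum analysis. Write
\[
Q_N(v) - 2^{H-\frac52}\int_0^1\!\!\int_0^1 F^2(v(\tau,\lambda))\,d\tau d\lambda
= T_N^{(1)} + T_N^{(2)} + T_N^{(3)},
\]
where $T_N^{(1)} := N^{2H-1}\sum_{i,j}\bigl[\Delta_{i,j}(v)^2 - F^2(v(\tau_i,\lambda_j))\Delta_{i,j}(V)^2\bigr]$, $T_N^{(2)} := N^{2H-1}\sum_{i,j} F^2(v(\tau_i,\lambda_j))\bigl[\Delta_{i,j}(V)^2 - \mathbb E[\Delta_{i,j}(V)^2]\bigr]$, and $T_N^{(3)}$ collects the remaining deterministic terms. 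Since the noise is spatially homogeneous and white in time, and each rectangle $[\tau_i,\tau_{i+1}]\times[\lambda_j,\lambda_{j+1}]$ lies in the admissible region, $\mathbb E[\Delta_{i,j}(V)^2]$ is independent of $(i,j)$. The goal reduces to bounding each $T_N^{(\ell)}$ by $cN^{-H}$ in $L^1(\Omega)$.

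For $T_N^{(1)}$, identity~\eqref{eq R} gives $\Delta_{i,j}(v) - F(v(\tau_i,\lambda_j))\Delta_{i,j}(V) = R_{1/N}^+(\tau_i,\lambda_j)$, hence
\[
\Delta_{i,j}(v)^2 - F^2(v(\tau_i,\lambda_j))\Delta_{i,j}(V)^2 = R_{1/N}^+(\tau_i,\lambda_j)\cdot\bigl[\Delta_{i,j}(v) + F(v(\tau_i,\lambda_j))\Delta_{i,j}(V)\bigr].
\]
Theorem~\ref{thm error 1} gives $\|R_{1/N}^+\|_{L^2(\Omega)}\le cN^{-(2H+1/2)}$; the Gaussian structure of $V$ with a direct spectral calculation gives $\|\Delta_{i,j}(V)\|_{L^2}\le cN^{-(H+1/2)}$; and moment bounds on $v$ together with the Lipschitz property of $F$ give the same order bound for $\|F(v(\tau_i,\lambda_j))\Delta_{i,j}(V)\|_{L^2}$ and $\|\Delta_{i,j}(v)\|_{L^2}$. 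Cauchy--Schwarz and summing over the $N^2$ pairs yield
\[
\mathbb E|T_N^{(1)}| \le c\, N^{2H-1}\cdot N^2 \cdot N^{-(2H+1/2)}\cdot N^{-(H+1/2)} = cN^{-H}.
\]

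For $T_N^{(3)}$, using the white-noise representation $\Delta_{i,j}(V) = \tfrac12 \tilde W([\tau_i,\tau_{i+1}]\times[\lambda_j,\lambda_{j+1}])$, where $\tilde W$ denotes the Gaussian noise in characteristic coordinates, and rescaling $\xi\mapsto N\xi$ in the spectral formula \eqref{Eq:noise_cov} identifies $N^{2H-1}\mathbb E[\Delta_{i,j}(V)^2] = 2^{H-5/2}N^{-2}$. This pins down the constant $2^{H-5/2}$ and reduces $T_N^{(3)}$ to the Riemann-sum error $2^{H-5/2}\bigl(N^{-2}\sum_{i,j}F^2(v(\tau_i,\lambda_j)) - \int_0^1\!\!\int_0^1 F^2(v)\bigr)$. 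This is controlled in $L^1$ by $cN^{-H}$ using the $(H-\eta)$-H\"older continuity of $v$ in both coordinates (inherited from the H\"older regularity of $u$) together with the Lipschitz property of $F$ and moment bounds on $v$.

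The main obstacle is $T_N^{(2)}$, the Gaussian fluctuation term. By Cauchy--Schwarz, it suffices to bound
\[
\|T_N^{(2)}\|_{L^2}^2 = N^{4H-2}\sum_{(i,j),(i',j')}\mathbb E\bigl[F^2(v(\tau_i,\lambda_j))F^2(v(\tau_{i'},\lambda_{j'}))(\Delta_{i,j}(V)^2 - c_N)(\Delta_{i',j'}(V)^2 - c_N)\bigr].
\]
Since $V$ is centered Gaussian, Isserlis' formula suggests a per-summand bound of order $(\mathbb E[\Delta_{i,j}(V)\Delta_{i',j'}(V)])^2$; but the weights $F^2(v)$ are not independent of the Gaussian increments, and handling this coupling is the principal technical hurdle. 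The natural route is to split the double sum into a near-diagonal part ($|i-i'|+|j-j'|\le K$, handled by Cauchy--Schwarz and the variance bound $c_N^2\le cN^{-(4H+2)}$) and an off-diagonal part requiring a decorrelation estimate for $\mathbb E[\tilde W(R_{i,j})\tilde W(R_{i',j'})]$, derived from the spectral formula and the Riesz-kernel decay of $\mu_H$. The combined estimate should yield $\|T_N^{(2)}\|_{L^2}^2 = O(N^{-2})$, hence $\mathbb E|T_N^{(2)}| \le cN^{-1}\le cN^{-H}$ for $H\in[1/2,1)$.
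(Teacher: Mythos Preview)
Your decomposition into $T_N^{(1)}, T_N^{(2)}, T_N^{(3)}$ coincides with the paper's $I_1, I_2, I_3$, and your treatment of $T_N^{(1)}$ and $T_N^{(3)}$ is essentially identical to theirs (Cauchy--Schwarz plus Theorem~\ref{thm error 1} for the first; Lipschitz of $F$ plus $L^p$-H\"older regularity \eqref{eq Holder v} for the Riemann-sum error).

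The gap is in $T_N^{(2)}$. You correctly flag the coupling between the weights $F^2(v(\tau_i,\lambda_j))$ and the Gaussian increments as the main obstacle, but the route you propose --- Isserlis plus an off-diagonal decorrelation estimate for $\mathbb E[\tilde W(R_{i,j})\tilde W(R_{i',j'})]$ --- only controls correlations between the Gaussian increments, not between the increments and the random weights. A covariance bound of the form $(\mathbb E[\Delta_{i,j}(V)\Delta_{i',j'}(V)])^2$ is not what enters the cross term once the non-Gaussian factors $F^2(v)$ are present, and you never say how to decouple them. The paper resolves this with an adaptedness/filtration argument rather than a covariance estimate: in the original time variable the rectangle supporting $\Delta_{i,j}(V)$ lies in $[(\tau_i+\lambda_j)/\sqrt2,(\tau_{i+1}+\lambda_{j+1})/\sqrt2]$, so $\Delta_{i,j}(V)$ is independent of the entire $\sigma$-field $\mathcal F_{(\tau_i+\lambda_j)/\sqrt2}$, while $v(\tau_i,\lambda_j)$, $v(\tau_{i'},\lambda_{j'})$ and $\Delta_{i',j'}(V)$ are all $\mathcal F_{(\tau_i+\lambda_j)/\sqrt2}$-measurable whenever $i+j\ge i'+j'+2$. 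Conditioning then makes every such cross term vanish exactly, leaving only $O(N^3)$ near-diagonal pairs (those with $|(i+j)-(i'+j')|\le 1$), each bounded by $O(N^{-4})$ via Cauchy--Schwarz and \eqref{2f-2}. This gives $\mathbb E[|T_N^{(2)}|^2]\lesssim N^{-1}$. Your spectral/decorrelation sketch would in fact recover that $\mathbb E[\Delta_{i,j}(V)\Delta_{i',j'}(V)]=0$ for $|(i+j)-(i'+j')|\ge 2$, but without the filtration step you have no mechanism to pass that zero through the $F^2(v)$ factors.
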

   
    The asymptotic limit of the quadratic variation $Q_N(v)$, characterized in Proposition \ref{prop variation converge}, paves the way for a direct estimation of the diffusion parameter. A comprehensive construction of the estimator   and its consistency is provided in Section \ref{QVPE}.

 We structure the remainder of this paper as follows. In Section 2, we present preliminary results on stochastic integration and 
properties of solutions to Equation \eqref{SWE}, adapted from \cite{D99, DQ}. In Section \ref{sec proof}, we   adapt techniques from \cite{HOO2024} to    prove Theorem \ref{thm error 1} concerning estimation error bounds. Finally,  we develop the quadratic variation analysis for the nonlinear stochastic wave equation and construct the corresponding parameter estimation framework in Section 4.

 \section{The noise and stochastic integral} 
 Following \cite{D99, DQ}, when $H\in (\frac12,1)$,  the covariance \eqref{Eq:noise_cov} is used to construct an inner product on the space $\mathcal D$ defined by 
\begin{align}
\begin{aligned}\label{Eq:noise_cov2}
 \langle \varphi, \psi\rangle := & \, \frac{1}{2\pi}\int_{\R_+} ds \int_\R \mu(d\xi)\, \mathscr{F}(\varphi(s, \cdot))(\xi) \overline{\mathscr{F}(\psi(s, \cdot))(\xi)}\\
  =  & \, \int_{\R_+} ds \int_\R dy \int_\R dy' \, \varphi(s, y) |y-y'|^{2H-2} \psi(s, y').
\end{aligned}
\end{align} 
 Let $\mathcal H$ be the completion of $\mathcal D$ with respect to the inner product $\langle \cdot, \cdot\rangle$. This space $\mathcal H$ will be the natural space of deterministic integrands with respect to $W$.  For any $g\in \mathcal H$, we say that $W(\varphi)$ is the Wiener integral of $\varphi$, denoted by 
$$
\int_0^{\infty}\int_{\mathbb R} g(t, x) W(dt, dx):= W(g).
$$
The space $\mathcal H$ contains all functions $g$ such that its Fourier transform in the space variable satisfies 
$$
\int_{0}^{\infty}   \int_\R     \left| \mathscr{F}(g(t, \cdot))(\xi) \right|^2    |\xi|^{1-2H} d\xi dt <\infty. 
$$
In particular, the space $\mathcal H$ contains all elements of the form ${\bf 1}_{[0,t]\times [0,x]}$, with $t>0$ and $x\in \mathbb R$. As a consequence of the representation   of the fractional Brownian  motion as a Wiener type integral with respect to a complex Brownian motion (see, instance, \cite[p. 257]{PT2000}), we have  
  \begin{align*}
& \E\left[W\left(\mathbf{1}_{[0,t]\times [0,x]}\right) W\left(\mathbf{1}_{[0,s]\times [0,y]}\right)\right] \\
= & \, \frac{1}{2} (t \wedge s) \left(|x|^{2H} + |y|^{2H} - |x-y|^{2H} \right).
\end{align*}
    This covariance corresponds to that of a standard Brownian motion in the time variable, while in the space variable it matches the covariance of a fractional Brownian motion with Hurst parameter  $H$.  The underlying filtration $\{\mathcal F_t, t\ge0\}$ generated by $W$ is 
\begin{align}\label{eq sigma filed}
\mathcal F_t=\sigma\left\{W({\bf 1}_{[0,s]} \varphi), s\in [0, t], \varphi\in  C_0^{\infty}\right\} \vee \mathcal N, 
\end{align}
 where $\mathcal N$ denotes the  class of  $\mathbb P$-null sets in $\mathcal F$.

   The solution to equation \eqref{SWE}  is understood in the mild sense. Specifically, for any $T>0$, we say that an adapted and 
 jointly measurable   process $u=\{u(t,x)\}_{t\ge0, x\in \mathbb R}$  is a solution to   \eqref{SWE} if, for all $(t,x)\in [0,T]\times \mathbb R$, the following holds:
\begin{equation}\label{eq solution1}
\begin{split}
u(t,x) =  \, \int_0^t\int_{\mathbb R} G(t-s, x-y) F(u(s,y))W(ds,dy),
   \end{split}
\end{equation}
where,  $G(t, x)$ is the   fundamental solution of the    wave equation in $\mathbb R$, given by 
\begin{align}\label{eq fund}
G(t,x)=\frac{1}{2}\mathbf 1_{\{|x|\le t\}}.
\end{align} 
As a result,   the last term in \eqref{eq solution1}  can be expressed as 
 \begin{equation}\label{eq solution}
\begin{split}
   \, \frac{1}{2}\iint_{\Delta(t,x)} F(u(s,y))W(ds,dy),
   \end{split}
\end{equation}
 where  
  \begin{align}
\Delta(t, x):=\left\{(s,y)\in \mathbb R_+\times \mathbb R: 0\le s\le t, \, |x-y|\le t-s \right\},
\end{align}
as illustrated  by the shaded area in Figure \ref{fig1}.
 
\begin{figure}
\begin{tikzpicture}
\path [fill=lightgray] (0,-2) -- (0,4) -- (3,1);
\draw [->] (0,-2.5) -- (0,4.6);
\draw [->] (0,0) -- (4,0);
\draw (0, -2) -- (0, 4) -- (3,1) -- (0,-2);
\draw [dashed] (3,0) node [below] {$t$} -- (3,1);
\draw [dashed] (0,1) node [left] {$x$} -- (3,1);
\draw [fill] (3,1) circle [radius=.05];
\draw [fill] (0,4) circle [radius=.05] node [left]{$x+t$};
\draw [fill] (0,-2) circle [radius=.05] node [left]{$x-t$};

\draw [fill] (0,4.6)   node [above]{$y$};
\draw [fill] (4.0,0)   node [right]{$s$};
\end{tikzpicture}
\caption{}\label{fig1}
\end{figure}

The existence, uniqueness, and regularity of the solution to \eqref{SWE} are guaranteed by the following results from \cite{D99, DS15}.   
\begin{proposition}\label{prop solution Holder}(\cite[Theorem 13]{D99},  \cite[Section 2.1]{DS15}) Assume that $H\in (\frac12, 1)$ and that $F$ is  Lipschitz continuous.  Then equation \eqref{SWE} admits  a unique mild solution $u=\{u(t,x)\}_{t\ge0, x\in \mathbb R}$.  Moreover, the following properties hold:
\begin{itemize}
 \item[(a)] For every  $T>0$ and  $1 \le p<\infty$,  
\begin{align}\label{eq moment bound}
\sup_{0\le t\le T}\sup_{x\in \mathbb R}\mathbb E\left[|u(t,x)|^p\right]<\infty. 
\end{align}
 
\item[(b)] For every  $T>0$  and $1 \le p<\infty$,  there exists a constant $c(p)>0$ such that 
\begin{align}\label{eq Holder}
\mathbb E\left[\left|u(t, x)-u(s, y)\right|^p \right]\le c(p)\left[|t-s|^{H}+|x-y|^{H} \right] ^{p},
\end{align}
for all $t, s\in [0,T]$ and $x, y\in \mathbb R$. 
\end{itemize}
  \end{proposition}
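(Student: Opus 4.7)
The plan is to follow the classical Dalang–Walsh framework adapted to spatially colored noise: verify that the wave kernel $G$ is an admissible integrand against $W$, set up a Picard iteration to obtain simultaneously existence, uniqueness, and the uniform $L^p$ bound (a), and then read off the Hölder continuity (b) from sharp $\mathcal H$-norm estimates on kernel increments.

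\textbf{Kernel integrability and the moment bound.} First I would verify that $(s,y) \mapsto G(t-s,x-y)$ lies in $\mathcal H$. By \eqref{Eq:noise_cov2} and the identity $\mathscr F(G(r,\cdot))(\xi) = \sin(r\xi)/\xi$, this reduces to the finiteness of $c_H \int_0^t \int_\R \sin^2((t-s)\xi)\,|\xi|^{-1-2H}\,d\xi\,ds$, which by the scaling $\eta=(t-s)\xi$ equals a constant multiple of $\int_0^t (t-s)^{2H}\,ds$; the assumption $H \in (1/2,1)$ secures integrability of $\sin^2\eta/|\eta|^{1+2H}$ both near zero (via the $\eta^{1-2H}$ asymptotics) and at infinity. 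Defining the Picard iterates $u_0 \equiv 0$ and $u_{n+1}(t,x) := \int_0^t \int_\R G(t-s,x-y) F(u_n(s,y))\, W(ds,dy)$, I would apply the $L^p$ Burkholder–Davis–Gundy inequality for this Gaussian stochastic integral (available from \cite{D99, DQ}) together with $\|F(u_n)\|_p \le C(1+\|u_n\|_p)$ to derive the recursion
\[
M_{n+1}(t) \le C_1 + C_2 \int_0^t (t-s)^{2H} M_n(s)\, ds, \qquad M_n(t):=\sup_{x\in\R}\|u_n(t,x)\|_p^2,
\]
where the factor $(t-s)^{2H}$ emerges from the explicit evaluation of $\iint G(t-s,x-y) G(t-s,x-y')|y-y'|^{2H-2}\,dy\,dy'$ on the cone support of $G$. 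Since convolution powers of $(t-s)^{2H}$ decay factorially in $n$, a Gronwall-type iteration yields $\sup_n \sup_{[0,T]\times\R} M_n < \infty$, proving (a).

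\textbf{Existence, uniqueness, and Hölder continuity.} Repeating the same recursive estimate on $u_{n+1}-u_n$ with $|F(a)-F(b)|\le L|a-b|$ produces a contraction, hence a unique mild solution $u$ inheriting the $L^p$ bound. For (b), writing
\[
u(t,x)-u(s,y) = \int_0^t\int_\R \bigl[G(t-r,x-z)-G(s-r,y-z)\bigr] F(u(r,z))\,W(dr,dz)
\]
(with the convention $G(s-r,\cdot)\equiv 0$ for $r>s$), BDG together with the moment bound reduces the problem to estimating the $\mathcal H$-norm of the kernel increment. I would split
\[
G(t-r,x-\cdot)-G(s-r,y-\cdot) = \bigl[G(t-r,x-\cdot)-G(s-r,x-\cdot)\bigr] + \bigl[G(s-r,x-\cdot)-G(s-r,y-\cdot)\bigr]
\]
and evaluate each piece on the Fourier side: the spatial term produces $|e^{-i\xi x}-e^{-i\xi y}|^2 \sin^2((s-r)\xi)/\xi^2$, which after weighting by $|\xi|^{1-2H}$ and rescaling yields a bound $c|x-y|^{2H}$; the temporal term produces $|\sin((t-r)\xi)-\sin((s-r)\xi)|^2/\xi^2$, yielding $c|t-s|^{2H}$. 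Combining gives $\|u(t,x)-u(s,y)\|_p^2 \le c(|t-s|^{2H}+|x-y|^{2H})$, which is exactly \eqref{eq Holder}.

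The main obstacle is the pair of Fourier estimates in the Hölder step: because $G = \tfrac12 \mathbf{1}_{\{|x|\le t\}}$ is merely an indicator, direct kernel manipulations are awkward, so one must pass to frequency space and match the $\sin^2$ oscillations against the spectral weight $|\xi|^{1-2H}$ while uniformly tracking the exponent $2H$ in both the temporal and spatial increments. The condition $H > 1/2$ is essential both at the Wiener-integrability stage (where it ensures local integrability of $|y|^{2H-2}$) and for closing these estimates; the resulting Hölder exponent $H$ matches precisely the fractional Brownian scaling in space inherited from the covariance formula noted after \eqref{eq sigma filed}.
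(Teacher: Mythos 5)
The paper does not actually prove this proposition; it is imported verbatim from \cite{D99} (Theorem 13) and \cite{DS15} (Section 2.1), so there is no internal argument to compare against. Your reconstruction is precisely the standard Dalang route from those references, and the key quantitative inputs are correct: the Fourier transform $\sin(r\xi)/\xi$ of the wave kernel, the scaling that produces the factor $(t-s)^{2H}$ in the Picard recursion, the factorial decay of convolution powers in the Gronwall step, and the H\"older exponent $H$ in both variables for part (b). One technical point deserves care in the H\"older step: the BDG quadratic variation carries the random factors $F(u(r,z))F(u(r,z'))$, so after bounding by absolute values and pulling out $\sup\|F(u(r,\cdot))\|_{L^p(\Omega)}$ you are left with $\iint |\Delta G(r,z)|\,|\Delta G(r,z')|\,|z-z'|^{2H-2}\,dz\,dz'$ rather than the signed quadratic form $\iint \Delta G\,\Delta G\,|z-z'|^{2H-2}$; only the latter equals $c\int |\mathscr{F}(\Delta G)(\xi)|^2\,\mu(d\xi)$, so the Fourier identities you invoke do not apply verbatim to the kernel \emph{differences}. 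The fix is standard and cheap: since $\Delta G$ is a difference of indicators of intervals, $|\Delta G|$ is one half times the indicator of a symmetric difference consisting of at most two intervals of length of order $|t-s|+|x-y|$, and the real-space identity \eqref{eq ab1} then yields the bound $c\big(|t-s|^{2H}+|x-y|^{2H}\big)$ directly --- this is in fact exactly how the paper's own Section 3 treats the analogous quadratic forms over the triangular regions $D_1$ and $D_2$. With that adjustment your argument is complete and matches the cited proofs.
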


\section{Local linearization}\label{sec proof}
  
    The following   lemma  is taken from Lee and Xiao \cite{LX2022}.
   \begin{lemma}\label{Lem2}\cite[Lemmas 2.3 and 2.4]{LX2022}  Assume that $H\in \left(\frac12, 1\right)$.   Let $V$ be given by \eqref{SWE 3}.  
     For every $\tau>0, \lambda \ge-\tau$, and  $0 < \varepsilon< \tau$,  
\begin{equation}\label{Eq:rec-inc}
\begin{split}
 \mathbb E\left[\left(V(\tau\pm\varepsilon, \lambda + \varepsilon) - V(\tau\pm \varepsilon, \lambda) -V(\tau, \lambda + \varepsilon) + V(\tau, \lambda)\right)^2\right] 
  =  2^{H-\frac{5}{2}}\varepsilon^{2H+1}.
\end{split}
\end{equation}
\end{lemma}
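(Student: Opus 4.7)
The plan is to exploit the characteristic geometry of the one-dimensional wave equation: under the rotation \eqref{eq transform} applied to the integration variables, the past light cone $\Delta(t,x)$ becomes a quarter-plane, so that a second-order rectangular increment of $V$ collapses to a single Wiener integral over an honest $\varepsilon\times\varepsilon$ square.

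Starting from the mild form $V(\tau,\lambda)=\tfrac12 W(\mathbf{1}_{\Delta(t,x)})$ derived from \eqref{eq solution}--\eqref{eq fund} with $(t,x)$ as in \eqref{SWE 3}, I would change the $(s,y)$-integration variables to $(\sigma,\eta):=((s-y)/\sqrt{2},(s+y)/\sqrt{2})$. The constraints $0\le s$ and $|x-y|\le t-s$ become $\sigma\le\tau$, $\eta\le\lambda$, and $\sigma+\eta\ge 0$, so the four-point alternating sum telescopes:
\begin{equation*}
V(\tau\pm\varepsilon,\lambda+\varepsilon)-V(\tau\pm\varepsilon,\lambda)-V(\tau,\lambda+\varepsilon)+V(\tau,\lambda)=\pm\tfrac12\, W(\mathbf{1}_{R_\pm}),
\end{equation*}
where $R_+=[\tau,\tau+\varepsilon]\times[\lambda,\lambda+\varepsilon]$ and $R_-=[\tau-\varepsilon,\tau]\times[\lambda,\lambda+\varepsilon]$ are $\varepsilon\times\varepsilon$ squares in $(\sigma,\eta)$-coordinates; the condition $\sigma+\eta\ge 0$ is automatic under $0<\varepsilon<\tau$ and $\lambda\ge -\tau$, so no boundary truncation is needed.

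Next, I would compute $\mathbb{E}[W(\mathbf{1}_{R_\pm})^2]$. By the temporal and spatial translation invariance of the noise, one may take $R_\pm$ centered at the origin in $(\sigma,\eta)$-coordinates; then in $(s,y)$-coordinates $R_\pm$ is a rotated square whose $y$-cross-section at fixed $s\in[0,\sqrt{2}\varepsilon]$ is an interval of length $2r(s)$, with $r(s)=\min(s,\sqrt{2}\varepsilon-s)$. Invoking \eqref{Eq:noise_cov2}, the inner double integral of $|y-y'|^{2H-2}$ over a slice of length $2r(s)$ reduces to $(2r(s))^{2H}/[H(2H-1)]$, so after the $s$-integration (using $r(s)=r(\sqrt{2}\varepsilon-s)$) the whole computation collapses to a constant multiple of
\begin{equation*}
\int_0^{\sqrt{2}\varepsilon}(2r(s))^{2H}\,ds = 2\cdot 2^{2H}\int_0^{\varepsilon/\sqrt{2}} s^{2H}\,ds = \frac{2^{H+\frac{1}{2}}}{2H+1}\,\varepsilon^{2H+1}.
\end{equation*}

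The structural reduction in the first step is essentially painless once the $(\sigma,\eta)$-viewpoint is adopted; the main obstacle is purely the bookkeeping of constants --- the spectral normalization $c_H$ of \eqref{eq constant}, the Plancherel-type identity relating \eqref{Eq:noise_cov} to \eqref{Eq:noise_cov2}, and the $\sqrt{2}$'s from the rotation --- which must combine to produce the declared coefficient $2^{H-5/2}$. A useful sanity check is the white-noise boundary $H=\tfrac12$, where $\mathbb{E}[W(\mathbf{1}_{R_\pm})^2]$ is simply the Lebesgue measure $|R_\pm|=\varepsilon^2$, giving $\varepsilon^2/4=2^{-2}\varepsilon^2$ and matching $2^{H-5/2}\varepsilon^{2H+1}$ at $H=\tfrac12$.
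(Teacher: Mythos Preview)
The paper does not prove this lemma; it simply quotes the result from Lemmas~2.3--2.4 of \cite{LX2022}. Your strategy is correct and is the natural one: passing to characteristic coordinates collapses the second-order increment of $V$ to $\pm\tfrac12\,W(\mathbf{1}_{R_\pm})$ over an honest $\varepsilon\times\varepsilon$ square (your $R_+$ is exactly the region $D_1\cup D_2$ drawn in Figure~\ref{fig2}), and the slice-by-slice variance computation via \eqref{eq ab1} goes through as you describe.

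Two caveats. First, if you push your computation through with the paper's stated noise normalisation (namely $\E[W(\mathbf{1}_{[0,t]\times[0,x]})^2]=t|x|^{2H}$, so that the spatial covariance of a length-$\ell$ interval is $\ell^{2H}$), your integral gives
\[
\frac14\int_0^{\sqrt{2}\varepsilon}(2r(s))^{2H}\,ds=\frac{2^{H-3/2}}{2H+1}\,\varepsilon^{2H+1},
\]
which matches the claimed $2^{H-5/2}\varepsilon^{2H+1}$ only at $H=\tfrac12$. This is exactly the ``bookkeeping of constants'' hazard you flagged; your $H=\tfrac12$ sanity check passes, but for $H>\tfrac12$ there is a factor $\tfrac{2}{2H+1}$ discrepancy, indicating a normalisation mismatch between this paper and \cite{LX2022} rather than a flaw in your argument. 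Second, your assertion that the constraint $\sigma+\eta\ge0$ is vacuous on $R_-$ in fact requires $\lambda\ge\varepsilon-\tau$ rather than merely $\lambda\ge-\tau$; but this stronger condition is already implicit for $V(\tau-\varepsilon,\lambda)$ to be defined at all, so the point is cosmetic.
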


\begin{figure}
\begin{tikzpicture}
\draw [->] (0,-3.75) -- (0,4) node [above] {$y$};
\draw [->] (0,0) -- (6.8,0) node [right] {$s$};
\draw [->] (0,0) -- (3.6,3.6) node [above right] {$\lambda$};
\draw [->] (0,0) -- (3.7,-3.7) node [right] {};

\draw [thick,domain=5.5/sqrt(2):8/sqrt(2)] plot (\x, {8/sqrt(2)-\x});
\draw [dashed,domain=4/sqrt(2):5.5/sqrt(2)] plot (\x, {8/sqrt(2)-\x});

\draw [thick,domain=3/sqrt(2):5.5/sqrt(2)] plot (\x, {3/sqrt(2)-\x});
\draw [dashed,domain=1.5/sqrt(2):4/sqrt(2)] plot (\x, {3/sqrt(2)-\x});

\draw [thick,domain=5.5/sqrt(2):8/sqrt(2)] plot (\x, {-8/sqrt(2)+\x});
\draw [dashed,domain=4/sqrt(2):5.5/sqrt(2)] plot (\x, {-8/sqrt(2)+\x});

\draw [thick,domain=3/sqrt(2):5.5/sqrt(2)] plot (\x, {-3/sqrt(2)+\x});
\draw [dashed,domain=1.5/sqrt(2):4/sqrt(2)] plot (\x, {-3/sqrt(2)+\x});

\draw [thick,domain=-2.5/sqrt(2):2.5/sqrt(2)] plot ({5.5/sqrt(2)}, \x); 

\draw [fill] ({4/sqrt(2)} ,{4/sqrt(2)}) circle [radius=.05] 

node [xshift=-0.6cm, yshift=0.25cm]{$\lambda+\varepsilon$};

\draw [fill] ({1.5/sqrt(2)},{1.5/sqrt(2)}) circle [radius=.05] 
node [xshift=-0.6cm, yshift=0.25cm]{$\lambda$};
\draw [fill] ({1.5/sqrt(2)},{-1.5/sqrt(2)}) circle [radius=.05] node [below left]{$\tau$};
\draw [fill] ({4/sqrt(2)},{-4/sqrt(2)}) circle [radius=.05] node [below left]{$\tau+\varepsilon$};
\node at (3.35,0.3) {$D_1$};
\node at (4.35,0.3) {$D_2$};

\draw [fill] ({3/sqrt(2)},{0}) circle [radius=.05] node[below=0.6mm]{$P_1$};
\draw [fill] ({5.5/sqrt(2)},{2.5/sqrt(2)}) circle [radius=.05] node [above=0.6mm]{$P_3$};
 
\draw [fill] ({5.5/sqrt(2)},{-2.5/sqrt(2)}) circle [radius=.05] node [below=0.6mm]{$P_2$};

\draw [fill] ({8/sqrt(2)},{0}) circle [radius=.05] node [below=0.6mm]{$P_4$};
   
\end{tikzpicture}
\caption{}\label{fig2}
\end{figure}
 
 Here are two elementary facts that will be used in the proof of Theorem \ref{thm error 1}. 
   \begin{lemma}
 For any $a<b$ and $H\in \left(\frac12, 1\right)$, we have 
 \begin{itemize}
\item[(a)] \begin{equation}\label{eq ab1}
 \int_a^bdx \int_a^bdy |x-y|^{2H-2}    = \frac{1}{(2H-1)H} (b-a)^{2H}.
 \end{equation}
 \item[(b)]
  \begin{equation}\label{eq ab2}
\int_{a}^{\frac{a+b}{2}}dx\int_{\frac{a+b}{2}}^bdy  |x-y|^{2H-2} =\frac{ 2^{2H-1} - 1}{H(2H-1)}  \left(\frac{b-a}{2}\right)^{2H}.
 \end{equation}
 \end{itemize}
 \end{lemma}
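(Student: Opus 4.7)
Both identities are elementary integral evaluations, and the plan is direct computation using the antiderivative $\int u^{2H-2}\,du = u^{2H-1}/(2H-1)$, which is valid throughout the domain since $H\in(1/2,1)$ forces $2H-2 \in (-1,0)$ and the integrand is locally integrable.

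For part (a), the plan is first to use the symmetry of $|x-y|^{2H-2}$ under the exchange $(x,y)\mapsto(y,x)$ to reduce the double integral to the triangle $\{a\le y\le x\le b\}$, yielding
\begin{equation*}
\int_a^b dx \int_a^b dy\,|x-y|^{2H-2} = 2\int_a^b dx\int_a^x (x-y)^{2H-2}\,dy.
\end{equation*}
The inner integral evaluates to $(x-a)^{2H-1}/(2H-1)$, and a second antiderivative in $x$ produces $(b-a)^{2H}/[H(2H-1)]$, which is the claimed value.

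For part (b), no symmetry is needed since $y>x$ holds on the entire region of integration. The plan is to integrate first in $y$, producing
\begin{equation*}
\frac{1}{2H-1}\Bigl[(b-x)^{2H-1}-\Bigl(\tfrac{a+b}{2}-x\Bigr)^{2H-1}\Bigr],
\end{equation*}
and then integrate each of the two power functions separately in $x$ over $[a,(a+b)/2]$. The boundary evaluations reduce the answer to
\begin{equation*}
\frac{1}{2H(2H-1)}\Bigl[(b-a)^{2H}-2\bigl(\tfrac{b-a}{2}\bigr)^{2H}\Bigr],
\end{equation*}
and the stated form follows by writing $(b-a)^{2H} = 2^{2H}((b-a)/2)^{2H}$ and factoring.

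The only potential obstacle is bookkeeping: making sure that the constants $H$, $2H-1$, and the powers of $2$ combine correctly at each step. There is no analytical subtlety beyond integrability of the kernel, which is automatic for $H>1/2$.
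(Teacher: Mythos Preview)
Your proposal is correct and follows essentially the same direct-computation approach as the paper. The only cosmetic difference is in part (b): the paper first rescales via $x=\frac{b-a}{2}u+a$, $y=\frac{b-a}{2}v+\frac{a+b}{2}$ to reduce to $\int_0^1\int_0^1(1+v-u)^{2H-2}\,du\,dv$ before integrating, whereas you integrate directly in the original variables; both routes are straightforward and yield the same result.
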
 
  \begin{proof} The identity  \eqref{eq ab1} follows by  direct computation. We now prove \eqref{eq ab2}.  Using the change of variables $$x= \left(\frac{b-a}{2}\right)u+a,\,  y=\left(\frac{b-a}{2}\right)v +\frac{a+b}{2},$$ 
  we obtain
 \begin{align*}
  \int_{a}^{\frac{a+b}{2}}dx\int_{\frac{a+b}{2}}^bdy  |x-y|^{2H-2} 
  = \, \left(\frac{b-a}{2}\right)^{2H}\int_{0}^{1}dv\int_{0}^1du (1+v-u)^{2H-2}.
 \end{align*}
 Note that 
 \begin{align*}
\int_{0}^{1}  dv  \int_{0}^{1} du (1 + v - u)^{2H-2}    =&\,   \frac{1}{2H-1} \int_{0}^{1}    \left((1+v)^{2H-1} - v^{2H-1}\right)dv  \\
=&\,  \frac{1}{2H-1} \left( \frac{2^{2H} - 1}{2H} - \frac{1}{2H}\right)\\
=&\, \frac{ 2^{2H-1} - 1}{H(2H-1)}.
 \end{align*}
 
 The proof is complete.   
  \end{proof}

\begin{proof}[Proof of Theorem \ref{thm error 1}]    
The proof follows a strategy similar to that of   Theorem 1.3 in \cite{HOO2024}.   By symmetry, we only need to prove \eqref{eq error} for $R_{\varepsilon}(\tau, \lambda)$.   By  \eqref{SWE 2}  and Proposition \ref{prop solution Holder},  it follows  that for any   $p\ge1$, there exists a constant $c(p)>0$ such that
\begin{align}\label{eq Holder v}
\|v(\tau_1, \lambda_1)-v(\tau_2, \lambda_2)\|_{L^p(\Omega)}\le c(p)\left[|\tau_1-\tau_2|^{H}+|\lambda_1-\lambda_2|^{H}\right],
\end{align}
 for all $\tau_1, \tau_2>0, \lambda_1\ge -\tau_1, \lambda_2\ge -\tau_2$.
  To ensure that   the integrand is  adapted, we decompose the domain of integration into two triangular regions. A     similar decomposition technique  to recover adaptedness is employed  in \cite[pp. 21]{CHKK19}.  For every fixed $\tau>0, \lambda\ge -\tau$, and $\varepsilon>0$, define  the regions $D_1$ and $D_2$ as follows: 
\begin{align*}
D_1:= & \, \Bigg\{(s, y)\in \mathbb R_+\times \mathbb R:\,  \frac{\tau+\lambda}{\sqrt 2} <  \,  s\le \frac{\tau+\lambda+\varepsilon}{\sqrt 2},\\
 & \,\,\,\,\,\,\,\,\,\,\,    \,\,\,\,\,\,\,\,\,\,\,  -s+\sqrt{2}\lambda <  y < s-\sqrt{2}\tau \Bigg\},\\
D_2:=&  \, \Bigg\{(s,y)\in \mathbb R_+\times \mathbb R :\,  \frac{\tau+\lambda+\varepsilon}{\sqrt 2} <  \, s \le \frac{\tau+\lambda+2\varepsilon}{\sqrt 2},\\
 & \,\,\,\,\,\,\,\,\,\,\,    \,\,\,\,\,\,\,\,\,\,\,    s-\sqrt{2}(\tau+\varepsilon)\le \, y\le -s+\sqrt 2(\lambda+ \varepsilon) \Bigg\}.
\end{align*}
 The  sets $D_1$ and $D_2$ are  illustrated in Figure \ref{fig2}. Specifically,   $D_1$ is the triangular region with vertices:
\begin{equation}\label{eq P}
\begin{split}  
   P_1 :=&\, \left(\frac{\tau+\lambda}{\sqrt 2}, \frac{-\tau+\lambda}{\sqrt 2}\right),\\
     P_2:=&\,  \left(\frac{\tau+\lambda+\varepsilon}{\sqrt 2}, \frac{-\tau+\lambda-\varepsilon}{\sqrt 2}\right),\\
P_3:=&\, \left(\frac{\tau+\lambda+\varepsilon}{\sqrt 2}, \frac{-\tau+\lambda+\varepsilon}{\sqrt 2}\right),\\
  \end{split}  
\end{equation}
and $D_2$ is the triangular region with vertices:
\begin{align*}
 P_2 , \, P_3,  \text{ and }\  P_4 :=&\,  \left(\frac{\tau+\lambda+2\varepsilon}{\sqrt 2}, \frac{-\tau+\lambda}{\sqrt 2}\right).
\end{align*}

Using equations  \eqref{eq transform} and \eqref{eq solution},  we decompose  the integral as follows:
\begin{align*}
 &  \frac{1}{2}\left(\iint_{D_1}+\iint_{D_2} \right)\left\{F(u(s,y))-F(u(P_1)) \right\}W(ds,dy)\\
=&\,  \frac{1}{2} \iint_{D_1} \left\{F(u(s, y))-F(u(P_1))\right\}W(ds,dy)\\
&\, +\frac{1}{2}   \left\{F(u(P_2))- F(u(P_1)) \right\} \iint_{D_2} W(ds,dy)\\
&\, +   \frac{1}{2} \iint_{D_2} \left\{F(u(s,y))-F(u(P_2)) \right\}W(ds,dy)\\
=:&\,  I_1+I_2+I_3.
\end{align*} 

  By the Burkholder–Davis–Gundy (BDG, for short) inequality,       \eqref{Eq:noise_cov},   \eqref{eq Holder},  and the Lipschitz continuity of $F$, we have 
\begin{align*}
 \mathbb{E}\left[\left|I_1\right|^p\right]^{\frac{2}{p}}
 \lesssim &\,   \int_{\frac{\tau+\lambda}{\sqrt 2} }^ \frac{\tau+\lambda+\varepsilon}{\sqrt 2}ds \int_{-s+ \sqrt{2}\lambda}^{s -\sqrt{2}\tau } dy \int_{-s + \sqrt{2}\lambda}^{s -\sqrt{2}\tau}  d\tilde{y}  |y - \tilde{y}|^{2H-2}\\
& \ \ \ \cdot  \left\| F(u(s,y)) - F(u(P_1)) \right\|_{L^p(\Omega)}   
   \left\| F(u(s,\tilde y)) -  F(u(P_1)) \right\|_{L^p(\Omega)}  \\
\lesssim &\,   \int_{\frac{\tau+\lambda}{\sqrt 2} }^ \frac{\tau+\lambda+\varepsilon}{\sqrt 2}ds\int_{-s + \sqrt{2}\lambda}^{s -\sqrt{2}\tau } dy \int_{-s + \sqrt{2}\lambda}^{s -\sqrt{2}\tau}  d\tilde{y}  |y - \tilde{y}|^{2H-2}\\
&\,   \cdot \left( \left| s - \frac{\tau+\lambda}{\sqrt{2}} \right| + \left| y - \frac{-\tau+\lambda}{\sqrt{2}} \right| \right)^{H}  \cdot \left( \left| s- \frac{\tau+\lambda}{\sqrt{2}} \right| + \left| \tilde y - \frac{-\tau+\lambda}{\sqrt{2}} \right| \right)^{H}.
  \end{align*}
  Here and below, $a\lesssim b$ means that there exists a constant $c>0$ such that $a\le c b$.
  
We further  divide the domain  of  integration  into  four parts:
\begin{align*}
    \left(  \int_{-s + \sqrt{2}\lambda}^{\frac{\lambda-\tau}{\sqrt 2} }  +   \int_{\frac{\lambda-\tau}{\sqrt 2}}^{s-\sqrt{2}\tau }    \right) dy \cdot
  \left(  \int_{-s + \sqrt{2}\lambda}^{\frac{\lambda-\tau}{\sqrt 2} }  +   \int_{\frac{\lambda-\tau}{\sqrt 2}}^{s -\sqrt{2}\tau }    \right)   d\tilde{y}.  
  \end{align*}
  
For the first term, we have
\begin{align*}
 & \int_{\frac{\tau+\lambda}{\sqrt 2} }^ \frac{\tau+\lambda+\varepsilon}{\sqrt 2}ds   \int_{-s + \sqrt{2}\lambda}^{\frac{\lambda-\tau}{\sqrt 2} } dy \int_{-s + \sqrt{2}\lambda}^{\frac{\lambda-\tau}{\sqrt 2} } d\tilde{y} 
  |y - \tilde{y}|^{2H-2} \\
  &\, \cdot \left( \left|s - \frac{\tau+\lambda}{\sqrt{2}} \right| + \left| y - \frac{-\tau+\lambda}{\sqrt{2}} \right| \right)^{H} 
  \cdot \left( \left| s - \frac{\tau+\lambda}{\sqrt{2}} \right| + \left| \tilde y - \frac{-\tau+\lambda}{\sqrt{2}} \right| \right)^{H}\\
   =&\,  \int_{\frac{\tau+\lambda}{\sqrt 2} }^ \frac{\tau+\lambda+\varepsilon}{\sqrt 2}ds   \int_{-s+ \sqrt{2}\lambda}^{\frac{\lambda-\tau}{\sqrt 2} } dy \int_{-s + \sqrt{2}\lambda}^{\frac{\lambda-\tau}{\sqrt 2} } d\tilde{y} 
  |y - \tilde{y}|^{2H-2}  \cdot \left(s- y-\sqrt{2}\tau  \right)^{H}  \cdot \left(  s- \tilde y-\sqrt{2}\tau  \right)^{H} \\
     \lesssim  &\,    \int_{\frac{\tau+\lambda}{\sqrt 2} }^ \frac{\tau+\lambda+\varepsilon}{\sqrt 2}ds  \left (2s-\sqrt{2}(\tau+\lambda)\right)^{2H}     \int_{-s + \sqrt{2}\lambda}^{\frac{\lambda-\tau}{\sqrt 2} } dy \int_{-s + \sqrt{2}\lambda}^{\frac{\lambda-\tau}{\sqrt 2} } d\tilde{y}
  |y - \tilde{y}|^{2H-2}  \\
  \lesssim &\,   \int_{\frac{\tau+\lambda}{\sqrt 2} }^ \frac{\tau+\lambda+\varepsilon}{\sqrt 2}ds  \left ( s- \frac{\tau+\lambda}{\sqrt{2}}\right)^{4H}  \\
=&\, \frac{1}{(1+4H) 2^{\frac12+2H}}   \varepsilon^{1+4H},
 \end{align*}
where     \eqref{eq ab1} is used  in the   last  second step.

For the second term,   changing   variables yields that 
\begin{align*}
 &   \int_{\frac{\tau+\lambda}{\sqrt 2} }^ \frac{\tau+\lambda+\varepsilon}{\sqrt 2}ds      \int_{-s + \sqrt{2}\lambda}^{\frac{\lambda-\tau}{\sqrt 2} } dy   \int_{\frac{\lambda-\tau}{\sqrt 2}}^{s -\sqrt{2}\tau }   d\tilde{y} 
  |y - \tilde{y}|^{2H-2} \\
    &\, \cdot \left( \left|s - \frac{\tau+\lambda}{\sqrt{2}} \right| + \left| y - \frac{-\tau+\lambda}{\sqrt{2}} \right| \right)^{H} 
  \cdot \left( \left| s - \frac{\tau+\lambda}{\sqrt{2}} \right| + \left| \tilde y- \frac{-\tau+\lambda}{\sqrt{2}} \right| \right)^{H}\\
  =&\, \int_{\frac{\tau+\lambda}{\sqrt 2} }^ \frac{\tau+\lambda+\varepsilon}{\sqrt 2}ds     \int_{-s+ \sqrt{2}\lambda}^{\frac{\lambda-\tau}{\sqrt 2} } dy      \int_{\frac{\lambda-\tau}{\sqrt 2}}^{s -\sqrt{2}\tau }   d\tilde{y} 
  |y - \tilde{y}|^{2H-2}   \cdot \left(  s - y-\sqrt{2}\tau  \right)^{H}     \cdot \left(  s+\tilde y-\sqrt{2} \lambda \right)^{H}\\
\le &  \int_{\frac{\tau+\lambda}{\sqrt 2} }^ \frac{\tau+\lambda+\varepsilon}{\sqrt 2}ds      \left (2s-\sqrt{2}(\tau+\lambda)\right)^{2H}   \int_{-s + \sqrt{2}\lambda}^{\frac{\lambda-\tau}{\sqrt 2} } dy      \int_{\frac{\lambda-\tau}{\sqrt 2}}^{s -\sqrt{2}\tau }   d\tilde{y}   |y - \tilde{y}|^{2H-2}   \\
  \lesssim &\,   \int_{\frac{\tau+\lambda}{\sqrt 2} }^ \frac{\tau+\lambda+\varepsilon}{\sqrt 2}ds  \left (s- \frac{\tau+\lambda}{\sqrt{2}}\right)^{4H}  \\
=&\,  \frac{1}{(1+4H) 2^{\frac12+2H}}  \varepsilon^{1+4H},
 \end{align*} 
 where    \eqref{eq ab2} is used  in the   last  second step.  

 By symmetry, the third term equals to the second.   
  For the fourth term, we have
\begin{align*}
& \int_{\frac{\tau+\lambda}{\sqrt 2} }^ \frac{\tau+\lambda+\varepsilon}{\sqrt 2}ds    \int_{\frac{\lambda-\tau}{\sqrt 2}}^{s -\sqrt{2}\tau }    dy    \int_{\frac{\lambda-\tau}{\sqrt 2}}^{s-\sqrt{2}\tau }     d\tilde{y} |y - \tilde{y}|^{2H-2} \\
  &\, \cdot \left( \left| s - \frac{\tau+\lambda}{\sqrt{2}} \right| + \left| y - \frac{-\tau+\lambda}{\sqrt{2}} \right| \right)^{H} 
  \cdot \left( \left| s - \frac{\tau+\lambda}{\sqrt{2}} \right| + \left| \tilde y - \frac{-\tau+\lambda}{\sqrt{2}} \right| \right)^{H}\\
  =&\, \int_{\frac{\tau+\lambda}{\sqrt 2} }^ \frac{\tau+\lambda+\varepsilon}{\sqrt 2}ds   \int_{\frac{\lambda-\tau}{\sqrt 2}}^{s -\sqrt{2}\tau }    dy   \int_{\frac{\lambda-\tau}{\sqrt 2}}^{s -\sqrt{2}\tau }     d\tilde{y} |y - \tilde{y}|^{2H-2}  \cdot 
  \left(   s +y-\sqrt{2} \lambda \right)^{H}  \left(   s+\tilde y-\sqrt{2}  \lambda \right)^{H}  \\
\lesssim &  \int_{\frac{\tau+\lambda}{\sqrt 2} }^ \frac{\tau+\lambda+\varepsilon}{\sqrt 2}ds       \left (2s-\sqrt{2}(\tau+\lambda)\right)^{2H} \int_{\frac{\lambda-\tau}{\sqrt 2}}^{s -\sqrt{2}\tau }    ds    \int_{\frac{\lambda-\tau}{\sqrt 2}}^{s -\sqrt{2}\tau }     d\tilde{y} |y - \tilde{y}|^{2H-2} \\
   = &\,    \int_{\frac{\tau+\lambda}{\sqrt 2} }^ \frac{\tau+\lambda+\varepsilon}{\sqrt 2}ds \left ( s- \frac{\tau+\lambda}{\sqrt{2}}\right)^{4H}  \\
=&\, \frac{1}{(1+4H) 2^{\frac12+2H}} \varepsilon^{1+4H},
 \end{align*}
where    \eqref{eq ab1} is used  in  last  second step.

Using the  H\"older  inequality,  we have 
\begin{equation}
\begin{split}
 \mathbb E\left[|I_2|^p\right]^{\frac{2}{p}} \lesssim &\,  \mathbb E\left[ \left| u(P_2)- u(P_1)\right|^{2p}\right]^{\frac{1}{p}}  \cdot \mathbb E\left[ \left|\iint_{D_2} W(ds,dy)\right|^{2p}\right]^{\frac1p}.
\end{split}
\end{equation} 
 By the Lipschitz continuity of $F$,  \eqref{eq Holder}, and  \eqref{eq P}, we have 
 \begin{align*}
 \mathbb E\left[ \left| u(P_2)- u(P_1)\right|^{2p}\right]^{\frac{1}{p}}  \lesssim \varepsilon^{2H}.
 \end{align*}
By   \eqref{eq ab2}, we have 
\begin{align*}
\mathbb E\left[ \left|\iint_{D_2} W(ds,dy)\right|^{2}\right]=&\,  \int_{\frac{\tau+\lambda+\varepsilon}{\sqrt 2}}^{   \frac{\tau+\lambda+2\varepsilon}{\sqrt 2}}ds\int_{
  s-\sqrt{2}(\tau+\varepsilon)}^{  -s+\sqrt 2(\lambda+ \varepsilon)}dy  \int_{   s-\sqrt{2}(\tau+\varepsilon)}^{  -s+\sqrt 2(\lambda+ \varepsilon)}d\tilde y   |y- \tilde{y}|^{2H-2}\\
   \lesssim &\,  \varepsilon^{1+2H}.
 \end{align*}

 Finally, for $I_3$, applying the BDG  inequality, the Lipschitz continuity of $F$, \eqref{Eq:noise_cov}, \eqref{eq Holder},  and \eqref{eq ab2}, we have 
\begin{align*}
 \mathbb{E}\left[\left|I_3\right|^p\right]^{\frac{2}{p}}
 \lesssim &\,   \int_{\frac{\tau+\lambda+\varepsilon}{\sqrt 2}}^{\frac{\tau+\lambda+2\varepsilon}{\sqrt 2}}ds \int_{s-\sqrt{2}(\tau+\varepsilon)}^{-s+\sqrt 2(\lambda+ \varepsilon)} dy \int_{s-\sqrt{2}(\tau+\varepsilon)}^{-s+\sqrt 2(\lambda+ \varepsilon) }  d\tilde{y}  |y - \tilde{y}|^{2H-2}\\
& \ \ \ \cdot  \left\| F(u(s,y)) - F(u(P_2)) \right\|_{L^p(\Omega)}   
   \left\| F(u(s,\tilde y)) -  F(u(P_2)) \right\|_{L^p(\Omega)}  \\
\lesssim &\,   \int_{\frac{\tau+\lambda+\varepsilon}{\sqrt 2}}^{\frac{\tau+\lambda+2\varepsilon}{\sqrt 2}}ds \int_{s-\sqrt{2}(\tau+\varepsilon)}^{-s+\sqrt 2(\lambda+ \varepsilon)} dy \int_{s-\sqrt{2}(\tau+\varepsilon)}^{-s+\sqrt 2(\lambda+ \varepsilon) }  d\tilde{y}  |y - \tilde{y}|^{2H-2}\\
&\,   \cdot \left(  s+y-\sqrt{2}\lambda \right)^{H}  \cdot \left(  s+\tilde y-\sqrt{2}\lambda  \right)^{H}\\
 \lesssim &\,   \varepsilon^2 \int_{\frac{\tau+\lambda+\varepsilon}{\sqrt 2}}^{\frac{\tau+\lambda+2\varepsilon}{\sqrt 2}}ds \int_{s-\sqrt{2}(\tau+\varepsilon)}^{-s+\sqrt 2(\lambda+ \varepsilon)} dy \int_{s-\sqrt{2}(\tau+\varepsilon)}^{-s+\sqrt 2(\lambda+ \varepsilon) }  d\tilde{y}  |y - \tilde{y}|^{2H-2}\\
 \lesssim&\,       \varepsilon^2 \int_{\frac{\tau+\lambda+\varepsilon}{\sqrt 2}}^{\frac{\tau+\lambda+2\varepsilon}{\sqrt 2}}ds  \left(\frac{\tau+\lambda+2\varepsilon}{\sqrt 2}-s\right)^{2H}\\
 \lesssim&\,    \varepsilon^{1+4H}.
  \end{align*}

The proof is complete. 
\end{proof}

\section{Quadratic variations and parameter estimation}\label{QVPE}

  The development of statistical inference for stochastic partial differential equations   is  largely motivated by the need to calibrate mathematical models using empirical data.    In many physical systems, including  wave propagation in heterogeneous media, the stochastic wave equation serves as   a fundamental framework. Accurate  estimation  of key parameters, especially the   diffusion constant or noise intensity, is essential for    enhancing the  predictive power of these models.    Although parameter    estimation under  additive noise has been extensively studied (see, e.g., \cite{AGT2022, KT2018, KTZ2018, Shev2023};  we also refer to      the monograph \cite{T2023} for   a comprehensive overview),  the multiplicative noise case remains less explored despite its greater practical relevance and theoretical challenges.

  \subsection{Parameter estimates}
In this section, we address the problem of estimating the diffusion parameter in the following stochastic wave equation:
    \begin{equation}\label{SWE para}
\begin{cases}
\vspace{6pt}
\displaystyle{\frac{\partial^2}{\partial t^2} u_{\theta }(t, x) =\frac{\partial^2}{\partial x^2} u_{\theta }(t, x) + \theta  F(u_{\theta}(t,x))\dot W(t, x), \quad t\ge 0, \, x \in \mathbb{R},}\\
\displaystyle{u_{\theta }(0, x) = 0, \quad \frac{\partial}{\partial t} u_{\theta}(0, x) = 0},
\end{cases}
\end{equation}
where $\dot W$ is a     space-time noise that is white in time and colored in space, and $F$ is a Lipschitz continuous function, as stated in \eqref{SWE}.

We investigate the estimation of the parameter $\theta$ from discrete observations of the solution $u_{\theta}$ over a space-time grid.
 Our analysis provides a partial resolution to the open problem posed in \cite[Remark 5.2]{AGT2022}. Unlike  the approach in \cite[Section 4]{TZ2025}, which relies on  discrete temporal  observations at a fixed spatial location,  the proposed methodology   fully exploits the   spatiotemporal structure of the data.

For every $\tau>0$ and $\lambda\ge -\tau$,    let
       \begin{align}\label{eq v theta}
   v_{\theta}(\tau, \lambda):=  u_{\theta}\left(\frac{\tau+\lambda}{\sqrt{2}}, \frac{-\tau+\lambda}{\sqrt{2}}\right). 
\end{align}
     For every $N\ge1$, let 
       \begin{align}\label{eq V v}
Q_{N}(v_{\theta}):= N^{2H-1}\sum_{i=0}^{N-1}\sum_{j=0}^{N-1}\Delta_{i,j}(v_{\theta})^2,
\end{align}
where 
$$
\Delta_{i,j}(v_{\theta})  := v_{\theta}\left(\tau_{i+1}, \lambda_{j+1}  \right)-v_{\theta}\left(\tau_{i+1}, \lambda_{j}\right)-v_{\theta}\left(\tau_{i}, \lambda_{j+1}  \right) +v_{\theta}\left(\tau_{i}, \lambda_{j}    \right), 
$$ 
 and    $\tau_i:=\frac{i}{N}, \lambda_j:=\frac{j}{N}$. 
 
  We postpone the proof of Proposition~\ref{prop variation converge}  to Subsection \ref{Sec QV}. Using Proposition~\ref{prop variation converge},  we know that
  \begin{align}\label{eq variation converge theta}
  Q_{N}(v_{\theta})\longrightarrow  \theta^2 2^{H-\frac{5}{2}}  \int_0^1\int_0^1 F^2\big(v_{\theta}(\tau, \lambda) \big) d\tau d\lambda,
  \end{align}
 in  $L^{1}(\Omega)$, as $N\rightarrow\infty$.  
This convergence allows us to  construct a consistent  estimator of the noise parameter $\theta$ based on the observations
$$\left\{u_{\theta}\left(\frac{\frac{i}{N}+\frac{j}{N} }{\sqrt 2}, \frac{-\frac{i}{N}+\frac{j}{N} }{\sqrt 2} \right);\, i, j = 0,1, \cdots, N\right\}.$$
  
Given the limit in \eqref{eq variation converge theta},  a natural approach  to estimating  $\theta$ is to use  the  approximation 
    \begin{align}\label{eq appr1}
   \theta  \approx \sqrt{\frac{Q_{N}(v_{\theta}) }{2^{H-\frac{5}{2}} \int_0^1\int_0^1 F^2\big(v_{\theta}(\tau, \lambda) \big) d\tau d\lambda}}.
    \end{align}
    To evaluate the  denominator,   we discretize the  double integral via  a   Riemann  sum:
    $$
    \int_0^1\int_0^1 F^2\big(v(\tau, \lambda) \big) d\tau d\lambda\approx N^{-2} \sum_{i=0}^{N-1} \sum_{j=0}^{N-1} F^2\big(v(\tau_i, \lambda_j) \big).
    $$
    This leads to the following estimator for    the diffusion parameter $\theta$:  for every $N\ge1$,  
      \begin{align}\label{eq appr2}
    \hat \theta_N := \sqrt{\frac{Q_{N}(v_{\theta})}{ 2^{H-\frac{5}{2}} N^{-2} \sum_{i=0}^{N-1} \sum_{j=0}^{N-1} F^2\big(v_{\theta}(\tau_i, \lambda_j) \big)}}.
    \end{align}
    
   From Proposition  \ref{prop variation converge}, we directly obtain the   following  consistency result. 
  \begin{corollary}\label{coro est}  Assume that $H\in \left[\frac12, 1\right)$. 
 The estimator $  \hat \theta_N$ defined in  \eqref{eq appr2} is weakly consistent, i.e., 
$$ \hat \theta_N \stackrel{\mathbb P}{\longrightarrow}   \theta,  \ \ \ \ \text{as } N\rightarrow\infty.
 $$
\end{corollary}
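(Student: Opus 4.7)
The plan is to express $\hat\theta_N^2$ as a ratio whose numerator and denominator converge separately in $L^1(\Omega)$, and then to conclude by Slutsky's theorem combined with the continuous mapping theorem. Writing
\begin{align*}
\hat\theta_N^2 = \frac{Q_N(v_\theta)}{2^{H-\frac{5}{2}}\, R_N}, \qquad R_N := N^{-2}\sum_{i=0}^{N-1}\sum_{j=0}^{N-1} F^2\bigl(v_\theta(\tau_i,\lambda_j)\bigr),
\end{align*}
the argument decomposes into two convergence ingredients followed by a combination step.

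\emph{First (numerator).} I would apply Proposition \ref{prop variation converge} to the solution $u_\theta$ of \eqref{SWE para}, whose diffusion coefficient $\theta F$ is Lipschitz whenever $F$ is. Since $(\theta F)^2 = \theta^2 F^2$, the proposition yields
\begin{align*}
\E\Bigl[\Bigl|Q_N(v_\theta) - \theta^2\cdot 2^{H-\frac{5}{2}} \int_0^1\!\!\int_0^1 F^2(v_\theta(\tau,\lambda))\,d\tau\,d\lambda\Bigr|\Bigr] \le c\,N^{-H}.
\end{align*}
Denoting $I := \int_0^1\!\int_0^1 F^2(v_\theta(\tau,\lambda))\,d\tau\,d\lambda$, this gives $Q_N(v_\theta) \to \theta^2\cdot 2^{H-5/2}\, I$ in $L^1(\Omega)$.

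\emph{Second (denominator: Riemann sum to integral).} I would show $R_N \to I$ in $L^1(\Omega)$ by a standard cellwise estimate. Writing the difference as a sum over the cells $[\tau_i,\tau_{i+1}]\times[\lambda_j,\lambda_{j+1}]$ of side $1/N$, and using the factorization $|F^2(a)-F^2(b)| \le |F(a)-F(b)|\,(|F(a)|+|F(b)|)$ together with the Lipschitz property of $F$, the Cauchy--Schwarz inequality, the uniform moment bound \eqref{eq moment bound}, and the $L^p$-Hölder continuity \eqref{eq Holder} (transferred to $v_\theta$ via \eqref{eq v theta}), one obtains on each cell
\begin{align*}
\E\bigl[|F^2(v_\theta(\tau,\lambda)) - F^2(v_\theta(\tau_i,\lambda_j))|\bigr] \lesssim (|\tau-\tau_i| + |\lambda-\lambda_j|)^{H}.
\end{align*}
Summing over the $N^2$ cells yields $\E[|R_N - I|] \lesssim N^{-H}$, hence $R_N \to I$ in $L^1$ and in probability.

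\emph{Third (combine).} With both convergences in hand, Slutsky's theorem gives that $\hat\theta_N^2 = Q_N(v_\theta)/(2^{H-5/2} R_N)$ converges in probability to $\theta^2$ on the event $\{I>0\}$; the continuous mapping theorem applied to the square root then produces $\hat\theta_N \to \theta$ in probability.

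\emph{Main obstacle.} The technical core of the argument is the Riemann-sum estimate in the second step, but it is essentially routine once \eqref{eq Holder} and \eqref{eq moment bound} are invoked. The genuine conceptual subtlety lies in ensuring $I > 0$ almost surely so that the ratio is well-defined in the limit. This is a standard nondegeneracy assumption on $F$ along the random trajectory of $v_\theta$ and holds, for instance, whenever $\inf_x F^2(x) > 0$, or more generally whenever $F(v_\theta(\tau,\lambda))$ does not vanish on a set of full Lebesgue measure in $[0,1]^2$ with positive probability.
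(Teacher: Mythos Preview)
Your proposal is correct and follows essentially the same route as the paper: both invoke Proposition~\ref{prop variation converge} for the numerator, prove the Riemann-sum convergence $R_N\to I$ in $L^1$ via the cellwise estimate using Lipschitz continuity of $F$, Cauchy--Schwarz, and the H\"older bound \eqref{eq Holder v}, and then combine the two limits. You are in fact more explicit than the paper in naming Slutsky and the continuous mapping theorem, and in flagging the nondegeneracy requirement $I>0$ almost surely, which the paper leaves entirely implicit.
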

      \begin{proof}
By Proposition  \ref{prop variation converge},  $Q_{N}(v_{\theta})$ converges in probability to  $ \theta^2 \int_0^1\int_0^1 F^2\big(v_{\theta}(\tau, \lambda)\big)d\tau d\lambda$. It remains to show that 
$$
I:=    \int_0^1\int_0^1 F^2\big(v_{\theta}(\tau, \lambda)\big)d\tau d\lambda  -\frac{1}{N^2} \sum_{i=0}^{N-1} \sum_{j=0}^{N-1} F^2\big(v_{\theta}(\tau_i, \lambda_j) \big) \stackrel{\mathbb P}{\longrightarrow} 0.
 $$
Note that 
   \begin{align*}
I= & \,   \sum_{i=0}^{N-1} \sum_{j=0}^{N-1}   \int_{\tau_i}^{\tau_{i+1}}\int_{\tau_j}^{\tau_{j+1}} \left[ F^2\big(v_{\theta}(\tau, \lambda)\big)-F^2\big(v_{\theta}(\tau_i, \lambda_j) \big)\right]d\tau d\lambda.
 \end{align*}
 For each $\tau, \lambda\in  [\tau_i, \tau_{i+1}) \times [\tau_j, \tau_{j+1})$, the   Lipschitz continuity of $F$ and the H\"older continuity of $v_{\theta}$ imply   
  \begin{align*}
& \mathbb E\left[    \left|F^2\big(v_{\theta}(\tau, \lambda)\big)-F^2\big(v_{\theta}(\tau_i, \lambda_j) \big)\right|  \right]\\
\le &\,       \left\{ \mathbb E\left[ \left|F\big(v_{\theta}(\tau, \lambda)\big)+F\big(v_{\theta}(\tau_i, \lambda_j) \big)\right|^2 \right]\right\}^{1/2}  \left\{ \mathbb E\left[ \left|F\big(v_{\theta}(\tau, \lambda)\big)-F\big(v_{\theta}(\tau_i, \lambda_j) \big)\right|^2\right]\right\}^{1/2}     \\
   \lesssim &\,   N^{-H}. 
 \end{align*}
Therefore,  we have
$$
\mathbb E\left[I \right] \lesssim N^{-H},
$$
which implies convergence in probability.  
\end{proof}

  \subsection{Quadratic  variations} \label{Sec QV}
  For every $N\ge1$ and $0\le i\le N$, define the grid points
   $\tau_i:=\frac{i}{N}, \lambda_j:=\frac{j}{N}$,   and the second-order increment
  $$
\Delta_{i,j}(V) := V\left(\tau_{i+1}, \lambda_{j+1}  \right)-V\left(\tau_{i+1}, \lambda_{j}\right)-V\left(\tau_{i}, \lambda_{j+1}  \right) +V\left(\tau_{i}, \lambda_{j}    \right).
$$
  The corresponding quadratic variation statistic is defined as 
   \begin{align}\label{eq V V}
Q_{N}(V):= N^{2H-1}\sum_{i=0}^{N-1}\sum_{j=0}^{N-1}\Delta_{i,j}(V)^2.
\end{align}

  We first  study  the asymptotic behavior of the sequence   $\{Q_{N}(V)\}_{ N\ge 1}$  as $N\rightarrow\infty$.
  	 	   
	\begin{lemma}\label{lem linear app} Assume that $H\in \left[\frac12, 1\right)$. Then, there exists a constant $c>0$ satisfying that    
		\begin{equation}\label{2f-10}
			\mathbb{E}\left[ \left| Q_{N}(V) -2^{H-\frac{5}{2}}\right|^2   \right]  \lesssim N^{-1}.
					\end{equation} 
	\end{lemma}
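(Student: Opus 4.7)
The plan is to establish \eqref{2f-10} via a Gaussian variance estimate, exploiting the temporal whiteness of $W$ together with the integrability of the spatial kernel.

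First, \eqref{Eq:rec-inc} applied with $\varepsilon = 1/N$ gives $\mathbb E[\Delta_{i,j}(V)^2] = 2^{H-5/2} N^{-(2H+1)}$. Summing over $0 \le i, j \le N-1$ and multiplying by $N^{2H-1}$ yields $\mathbb E[Q_N(V)] = 2^{H-5/2}$, so the quantity in \eqref{2f-10} is exactly $\mathrm{Var}(Q_N(V))$. Since $V$ is centered Gaussian, the Isserlis--Wick identity $\mathrm{Cov}(\Delta_{i,j}(V)^2, \Delta_{k,l}(V)^2) = 2\bigl(\mathbb E[\Delta_{i,j}(V)\Delta_{k,l}(V)]\bigr)^2$ gives
\[
\mathrm{Var}(Q_N(V)) = 2\,N^{4H-2} \sum_{i,j,k,l=0}^{N-1} \bigl(\mathbb E[\Delta_{i,j}(V)\Delta_{k,l}(V)]\bigr)^2.
\]

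Next, I would identify $\Delta_{i,j}(V)$ as a simple Wiener integral. Expanding each $V$-term via \eqref{SWE 3} and \eqref{eq solution} and applying inclusion--exclusion to the four light cones $\Delta(P_{\alpha\beta})$ associated with the corners of $[\tau_i, \tau_{i+1}] \times [\lambda_j, \lambda_{j+1}]$, one obtains $\Delta_{i,j}(V) = \tfrac12 W(\mathbf 1_{R_{i,j}})$, where, in the rotated coordinates $(\tilde\tau, \tilde\lambda) := ((s-y)/\sqrt 2, (s+y)/\sqrt 2)$, the region $R_{i,j}$ is precisely the rectangle $(\tau_i, \tau_{i+1}] \times (\lambda_j, \lambda_{j+1}]$ (a square of side $\varepsilon = 1/N$ rotated $45^\circ$ in the $(s,y)$-plane). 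Because $W$ is white in $s$, $\mathbb E[\Delta_{i,j}(V)\Delta_{k,l}(V)]$ vanishes unless the $s$-projections of $R_{i,j}$ and $R_{k,l}$ intersect, which, since $s\sqrt 2 = \tilde\tau + \tilde\lambda$, translates into the index constraint $|(i-k)+(j-l)| \le 1$.

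For the non-vanishing covariances, setting $m := i-k$ and $n := j-l$ and introducing $\alpha := \tilde\tau'-\tilde\tau$ along constant-$s$ lines (so that $\tilde\lambda'-\tilde\lambda = -\alpha$ and $|y-y'| = \sqrt 2 |\alpha|$), a direct calculation — using \eqref{eq ab1} for the case $|n-m| \le 1$ where the $y$-cross-sections overlap and $|\alpha|^{2H-2}$ is singular but integrable (since $2H-2>-1$), and using $|y-y'|^{2H-2} \lesssim (|n-m|\varepsilon)^{2H-2}$ uniformly for $|n-m| \ge 2$ — yields
\[
\bigl|\mathbb E[\Delta_{i,j}(V)\Delta_{k,l}(V)]\bigr| \lesssim \varepsilon^{2H+1}\,(|n-m| \vee 1)^{2H-2}.
\]

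Finally, I sum over quadruples. For each admissible $(m, n)$ with $|m+n| \le 1$ there are at most $N^2$ index quadruples, and for fixed $m+n \in \{-1,0,1\}$ the parameter $q := n-m$ ranges over $O(N)$ values, so
\[
\sum_{i,j,k,l}\bigl(\mathbb E[\Delta_{i,j}(V)\Delta_{k,l}(V)]\bigr)^2 \lesssim N^2\, \varepsilon^{4H+2} \sum_{|q|\le N}(|q|\vee 1)^{4H-4} = N^{-4H}\,\Sigma_N,
\]
with $\Sigma_N$ equal to $O(1)$, $O(\log N)$, or $O(N^{4H-3})$ for $H\in[1/2,3/4)$, $H=3/4$, and $H\in(3/4,1)$ respectively. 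Multiplying by $2N^{4H-2}$ gives $\mathrm{Var}(Q_N(V)) \lesssim N^{-2}$, $N^{-2}\log N$, or $N^{4H-5}$; since $H<1$ forces $4H-5<-1$, each bound is $\lesssim N^{-1}$, completing the proof. The main obstacle is taming the singularity of $|y-y'|^{2H-2}$ near the diagonal in the covariance estimate — this is resolved by the integrability $2H-2 > -1$ together with \eqref{eq ab1}; the case $H=1/2$ (space-time white noise) is even simpler since only the diagonal contributes and one directly obtains $\mathrm{Var}(Q_N(V)) \lesssim N^{-2}$.
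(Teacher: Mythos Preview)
Your proof is correct and takes a genuinely different route from the paper. The paper's argument is more elementary: it records the moment bound $\mathbb E\bigl[(N^{2H-1}\Delta_{i,j}(V)^2 - 2^{H-5/2}N^{-2})^{2}\bigr]\lesssim N^{-4}$, uses temporal whiteness to decouple the centered summands whose $s$-supports are disjoint, and then applies Cauchy--Schwarz crudely inside each remaining block to reach the uniform rate $N^{-1}$. You instead exploit the full Gaussian structure via the Isserlis--Wick identity, identify $\Delta_{i,j}(V)=\tfrac12 W(\mathbf 1_{R_{i,j}})$ explicitly, and extract the pointwise covariance decay $\bigl|\mathbb E[\Delta_{i,j}(V)\Delta_{k,l}(V)]\bigr|\lesssim \varepsilon^{2H+1}(|n-m|\vee 1)^{2H-2}$ from the geometry of the rotated squares. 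This extra effort buys a strictly sharper variance bound --- $N^{-2}$, $N^{-2}\log N$, or $N^{4H-5}$ according as $H<3/4$, $H=3/4$, or $H>3/4$ --- each of which is $\lesssim N^{-1}$; the paper's double Cauchy--Schwarz cannot beat $N^{-1}$ for any $H$. Conversely, the paper's route is shorter and avoids any explicit computation with the spatial kernel $|y-y'|^{2H-2}$. Note also that your decoupling criterion $|(i-k)+(j-l)|\le 1$ is the geometrically correct one (the paper states it as $|i-i'|\ge 2$, which is imprecise but repaired by grouping along the diagonals $i+j$ rather than along $i$).
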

 \begin{proof}
From  equation \eqref{Eq:rec-inc},  the random variable  $N^{H-\frac12} \Delta_{i,j}(V)$  follows a  Gaussian distribution with mean zero and   variance $2^{H-\frac{5}{2}}N^{-2}$.   Using  standard moment bounds for  Gaussian random variables, we obtain that for all $i, j\geq 1$ and $p\in \mathbb N$,	 	\begin{equation}\label{2f-2}
			\mathbb{E}\left[ \left(N^{2H-1} \left( \Delta_{i,j}(V) \right)^{2} -2^{H-\frac{5}{2}}N^{-2}\right) ^{2p}\right]\lesssim  N^{-4p}.
		\end{equation} 
	
	Due to the temporal independence  of the  noise $\{W(t, x)\}_{t\ge0, x\in\mathbb R}$, which is white in time and colored in space,   	  the increments 
  $\Delta_{i,j}(V) $ and $\Delta_{i',j'}(V) $ are independent whenever  $|i-i'|\ge2$.  Applying  the  Cauchy-Schwarz  inequality and the moment estimate  \eqref{2f-2},  we derive
       \begin{align*}
&   \mathbb{E}\left[   \left(\sum_{i=0}^{N-1}\sum_{j=0}^{N-1}  \left( N^{2H-1}  \left(\Delta_{i,j}(V) \right)^{2} -2^{H-\frac{5}{2}}N^{-2}\right)  \right)^{2}\right]   \\
  \le &\, 4  \sum_{i=0}^{N-1}   \mathbb{E} \left[   \left(  \sum_{j=0}^{N-1}  \left( N^{2H-1}  \left(\Delta_{i,j}(V) \right)^{2} -2^{H-\frac{5}{2}}N^{-2}\right)  \right)^{2}\right] \\
  \le & \, 4 N \sum_{i=0}^{N-1}   \sum_{j=0}^{N-1}   \mathbb{E} \left[  \left( N^{2H-1}  \left(\Delta_{i,j}(V) \right)^{2} -2^{H-\frac{5}{2}}N^{-2}  \right)^{2}\right]\\
\lesssim  &\,  N^{-1}.
    \end{align*} 
   
    The proof is complete. 
     \end{proof}

   Next, we prove Proposition \ref{prop variation converge} by applying the  approximation of the increment $\Delta_{i,j}(v)$  by $F(v(\tau_i, \lambda_j))\Delta_{i,j}(V)$, as established  in Theorem \ref{thm error 1}.
	 	      
   \begin{proof} [Proof of Proposition \ref{prop variation converge}]
We decompose the target difference into three parts:
\begin{equation}
\begin{split}
&Q_{N}(v)-     \int_0^1\int_0^1 F^2\big(v(\tau, \lambda) \big) d\tau d\lambda\\
 =&\, N^{2H-1}\sum_{i=0}^{N-1}\sum_{j=0}^{N-1}\left\{\Delta_{i,j}(v)^2 - F^2(v(\tau_i, \lambda_j)) \Delta_{i,j}(V)^2 \right\}\\
 & +\sum_{i=0}^{N-1}\sum_{j=0}^{N-1} F^2(v(\tau_i, \lambda_j)) \left\{N^{2H-1} \Delta_{i,j}(V)^2   -2^{H-\frac{5}{2}}N^{-2} \right\}\\
  &+2^{H-\frac{5}{2}} \left[\sum_{i=0}^{N-1}\sum_{j=0}^{N-1}  F^2(v(\tau_i, \lambda_j))  N^{-2} -  \int_0^1\int_0^1 F^2\big(v(\tau, \lambda) \big) d\tau d\lambda\right]\\
  =:&\, I_1+I_2+I_3.
 \end{split}
 \end{equation}
 
\noindent\textbf{Step 1:  Estimation of  $I_1$.}
  By  the Cauchy-Schwarz inequality,      
 \begin{align*}
&\mathbb E \left[\left|\Delta_{i,j}(v)^2 - F^2(v(\tau_i, \lambda_j)) \Delta_{i,j}(V)^2\right| \right]\\
\le &\,   \left( \mathbb E \left[\left|\Delta_{i,j}(v)-   F (v(\tau_i, \lambda_j)) \Delta_{i,j}(V)\right|^2 \right]\right)^{\frac12} \left( \mathbb E \left[\left|\Delta_{i,j}(v)+   F (v(\tau_i, \lambda_j)) \Delta_{i,j}(V)\right|^2 \right]\right)^{\frac12} .
 \end{align*}
 From Theorem \ref{thm error 1}, we obtain the bound for the first factor:
  \begin{align*}
  \left( \mathbb E \left[\left|\Delta_{i,j}(v)-   F (v(\tau_i, \lambda_j)) \Delta_{i,j}(V)\right|^2 \right]\right)^{\frac12} 
 \lesssim  \,   N^{-\frac 12-2H}.
 \end{align*}
  For the second factor, applying      Minkowski's inequality,  H\"older's inequality, \eqref{Eq:rec-inc},  and  Theorem \ref{thm error 1} again, we have
  \begin{align*}
&  \left( \mathbb E \left[\left|\Delta_{i,j}(v)+   F (v(\tau_i, \lambda_j)) \Delta_{i,j}(V)\right|^2 \right]\right)^{\frac12}\\
  \le &\, \left( \mathbb E \left[\left|\Delta_{i,j}(v) \right|^2 \right]\right)^{\frac12}+\left( \mathbb E \left[\left|   F (v(\tau_i, \lambda_j)) \Delta_{i,j}(V)\right|^2 \right]\right)^{\frac12} \\
   \lesssim & \,  N^{-\frac12-H}.
  \end{align*}
 Combining the two bounds above and summing over  $i, j=0, \cdots, N-1$, we conclude that 
 \begin{align*}
\mathbb E [I_1] \lesssim  N^{-H}.
  \end{align*} 

\noindent\textbf{Step 2:  Estimation of  $I_2$.}
      Let $ \mathcal F_t$ be the   $\sigma$-algebra defined in \eqref{eq sigma filed}.  Consider any pairs $(i, j), (i', j')\in \mathbb N^2$  such that  $i+j\ge i'+j'+2$. By \eqref{eq transform} and  the temporal independence of  $\{W(t, x)\}_{t\ge0, x\in\mathbb R}$,    the random variables    $v(\tau_{i'}, \lambda_{j'}), v(\tau_i, \lambda_j)$,  and $\Delta_{i',j'}(V)$ are  $\mathcal F_{\sqrt{2}(\tau_i+\lambda_j)}$-measurable,   while  $\Delta_{i,j}(V)$   is independent of   $\mathcal F_{\sqrt{2}(\tau_i+\lambda_j)}$. Taking the conditional expectation with respect to    $\mathcal F_{\sqrt{2}(\tau_i+\lambda_j)}$, we obtain
    \begin{align*}
  & \mathbb E\left[   F^2(v(\tau_i, \lambda_j) )  F^2(v(\tau_{i'}, \lambda_{j'})) \left( N^{2H-1} \Delta_{i,j}(V)^2   -2^{H-\frac{5}{2}}N^{-2}\right)   \left( N^{2H-1}  \Delta_{i',j'}(V)^2   -2^{H-\frac{5}{2}}N^{-2}\right) \right]\\
   =&\, \mathbb E\Bigg[   F^2(v(\tau_i, \lambda_j) )  F^2(v(\tau_{i'}, \lambda_{j'}))     \left( N^{2H-1}  \Delta_{i',j'}(V)^2   - 2^{H-\frac{5}{2}}N^{-2}\right) \\
   &   \ \ \ \ \ \  \ \ \ \ \     \cdot  \mathbb E\left[  \left( N^{2H-1} \Delta_{i,j}(V)^2   -2^{H-\frac{5}{2}}N^{-2}\right)|\mathcal F_{\sqrt{2}(\tau_i+\lambda_j)}\right] \Bigg]  \\
   =& \, 0,
     \end{align*}
     where    the last equality follows from   \eqref{Eq:rec-inc}.
     
For pairs with   $(i, j), (i', j')\in \mathbb N^2$  with $|(i+j)-( i'+j')|<2$,  we apply      the  Cauchy-Schwarz  inequality and the moment estimate  \eqref{2f-2}  to get
       \begin{align*} 
& \mathbb E\left[   F^2(v(\tau_i, \lambda_j)   F^2(v(\tau_{i'}, \lambda_{j'})  \left( N^{2H-1} \Delta_{i,j}(V)^2   - 2^{H-\frac{5}{2}}N^{-2}\right) \left( N^{2H-1} \Delta_{i',j'}(V)^2   -2^{H-\frac{5}{2}}N^{-2}\right)    \right]\\
    \lesssim & \,   \left( \mathbb E\left[   F^8(v(\tau_i, \lambda_j) \right]\right)^{1/4 }  \left( \mathbb E\left[   F^8(v(\tau_{i'}, \lambda_{j'}) \right]\right)^{1/4 }  \\
 & \cdot   		\left(	\mathbb{E}\left[  \left( N^{2H-1} \left( \Delta_{i,j}(V) \right)^{2} -2^{H-\frac{5}{2}}N^{-2}\right) ^{4}\right] \right)^{1/4 } 
  \left(	\mathbb{E}\left[  \left( N^{2H-1} \left( \Delta_{i',j'}(V) \right)^{2} -2^{H-\frac{5}{2}}N^{-2}\right) ^{4}\right] \right)^{1/4 } \\
 \lesssim &\,    N^{-4}. 
      \end{align*}
Summing over all such pairs yields 
 \begin{align*}
\mathbb E [|I_2|]\le c N^{-1}.
  \end{align*} 
  
  \noindent\textbf{Step 3:  Estimation of  $I_3$.}  Using the H\"older continuity of $v$  from \eqref{eq Holder v},  the Lipschitz continuity of $F$, and the Cauchy-Schwarz inequality, we  obtain
        \begin{align*}
  \mathbb E[|I_3|] \le &  \,  2^{H-\frac{5}{2}} \sum_{i=0}^{N-1}\sum_{j=0}^{N-1} \int_{\tau_i}^{\tau_{i+1}}  \int_{\tau_j}^{\tau_{j+1}}    \mathbb E  \left[| F^2(v(\tau_i, \lambda_j)) - F^2(v(\tau, \lambda))|  \right]  d\tau d\lambda\\
   \lesssim &  \,   \sum_{i=0}^{N-1}\sum_{j=0}^{N-1} \int_{\tau_i}^{\tau_{i+1}}  \int_{\tau_j}^{\tau_{j+1}}  \left(  \mathbb E  \left[| F(v(\tau_i, \lambda_j)) - F(v(\tau, \lambda))| ^2 \right] \right)^{\frac12} d\tau d\lambda\\
  \lesssim &\,    N^{-H}.
    \end{align*} 
       The proof is complete. 
    \end{proof}
  
 \subsection{Numerical experiments}
   
  Let us consider the 
equation  \eqref{SWE para} with $F(u)=1+\sin(u)$ and $\theta=2$.  We simulate the paths of the solution \eqref{SWE para}, using  the R procedure \textbf{fieldsim} which is available via \cite{BIL07}. The following  numerical results are consistent with the theoretical results from Corollary \ref{coro est}.

 \subsubsection{Space-time white noise}
 Take the parameter  $H=0.5$.   Using  Corollary \ref{coro est},  we compute   the estimator $\hat \theta_N$ defined in  \eqref{eq appr2}, and consequently their sample mean and sample standard deviation. In Figure  \ref{fig:left1}, we present one realization of the estimator $\hat \theta_N$ for $N=20, \cdots, 60$, and  we display the absolute and relative errors in Figure  \ref{fig:right1}.

\begin{figure}[htbp]
    \centering
    \begin{minipage}[t]{0.48\textwidth}
        \centering
        \includegraphics[width=\linewidth]{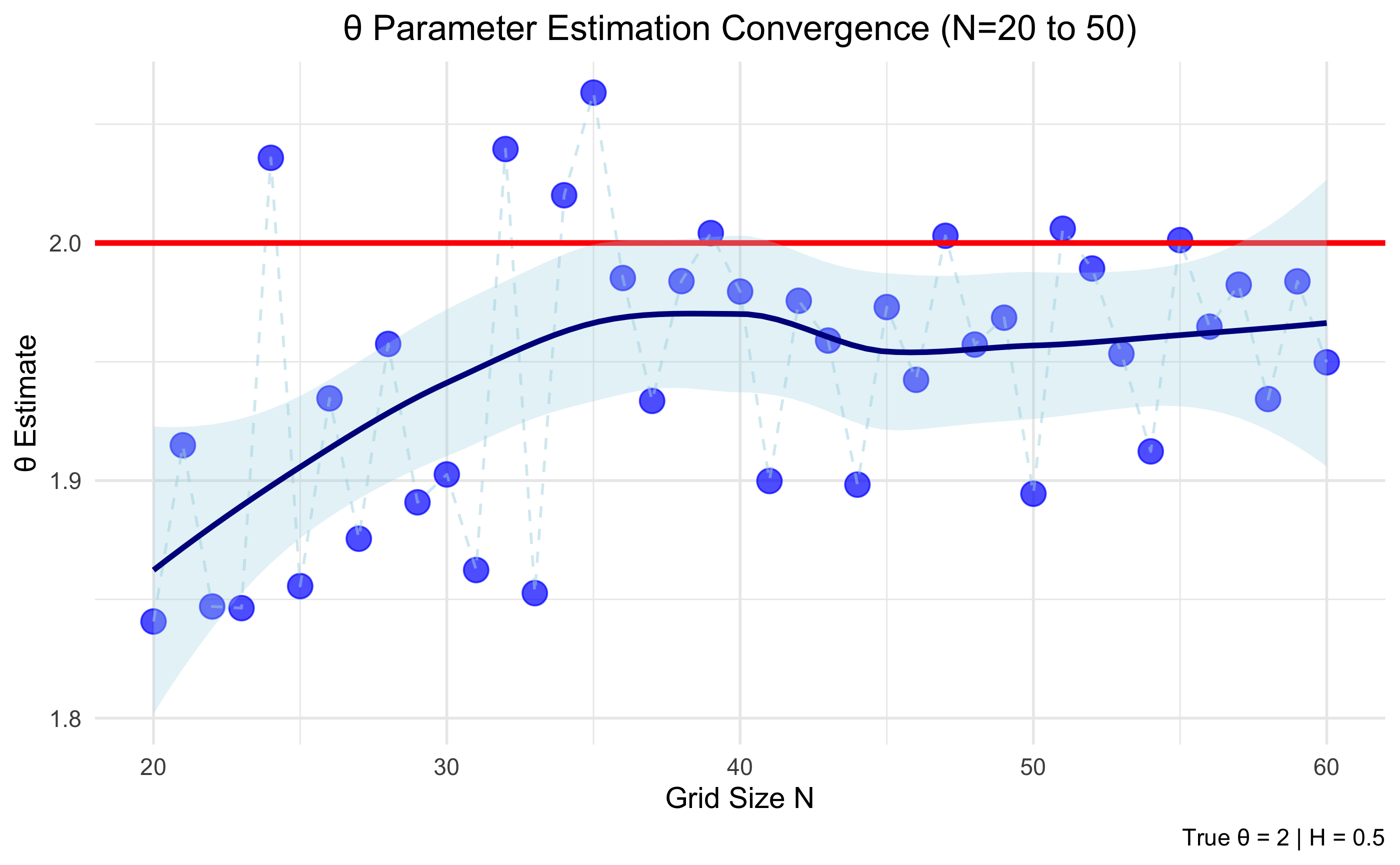}
       \caption{ }
        \label{fig:left1}
    \end{minipage}
    \hfill
    \begin{minipage}[t]{0.48\textwidth}
        \centering
        \includegraphics[width=\linewidth]{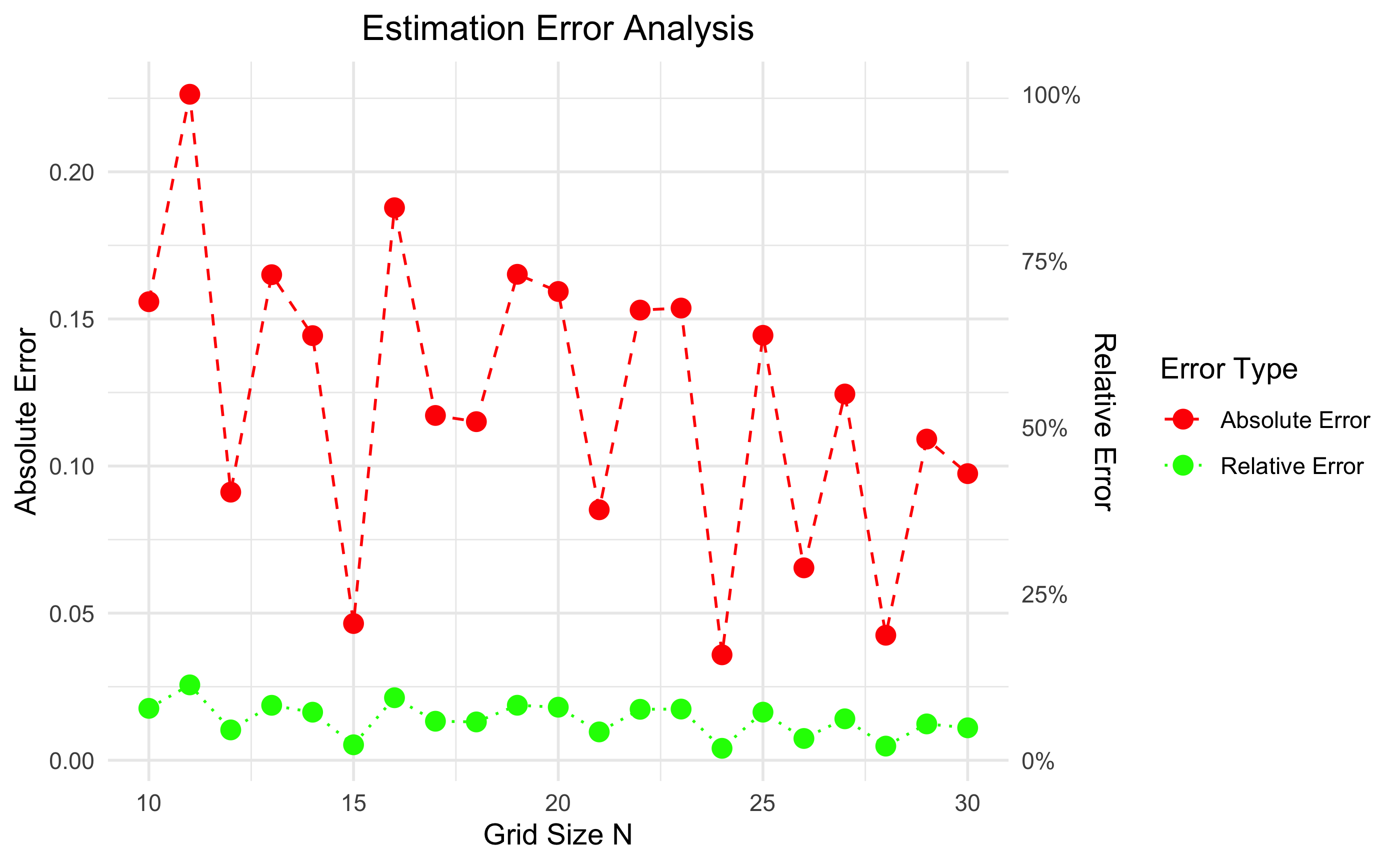}
         \caption{ }
        \label{fig:right1}
    \end{minipage}
\end{figure}
       
 \subsubsection{Time-white and space-colored noise} Take the parameter  $H=0.55$. 
   Using  Corollary \ref{coro est},  we compute   the estimator $\hat \theta_N$ defined in  \eqref{eq appr2}, and consequently their sample mean and sample standard deviation. In Figure  \ref{fig:left2}, we present one realization of the estimator $\hat \theta_N$ for $N=30, \cdots, 60$, and  we display the absolute and relative errors in Figure  \ref{fig:right2}.
     
\begin{figure}[htbp]
    \centering
    \begin{minipage}[t]{0.48\textwidth}
        \centering
        \includegraphics[width=\linewidth]{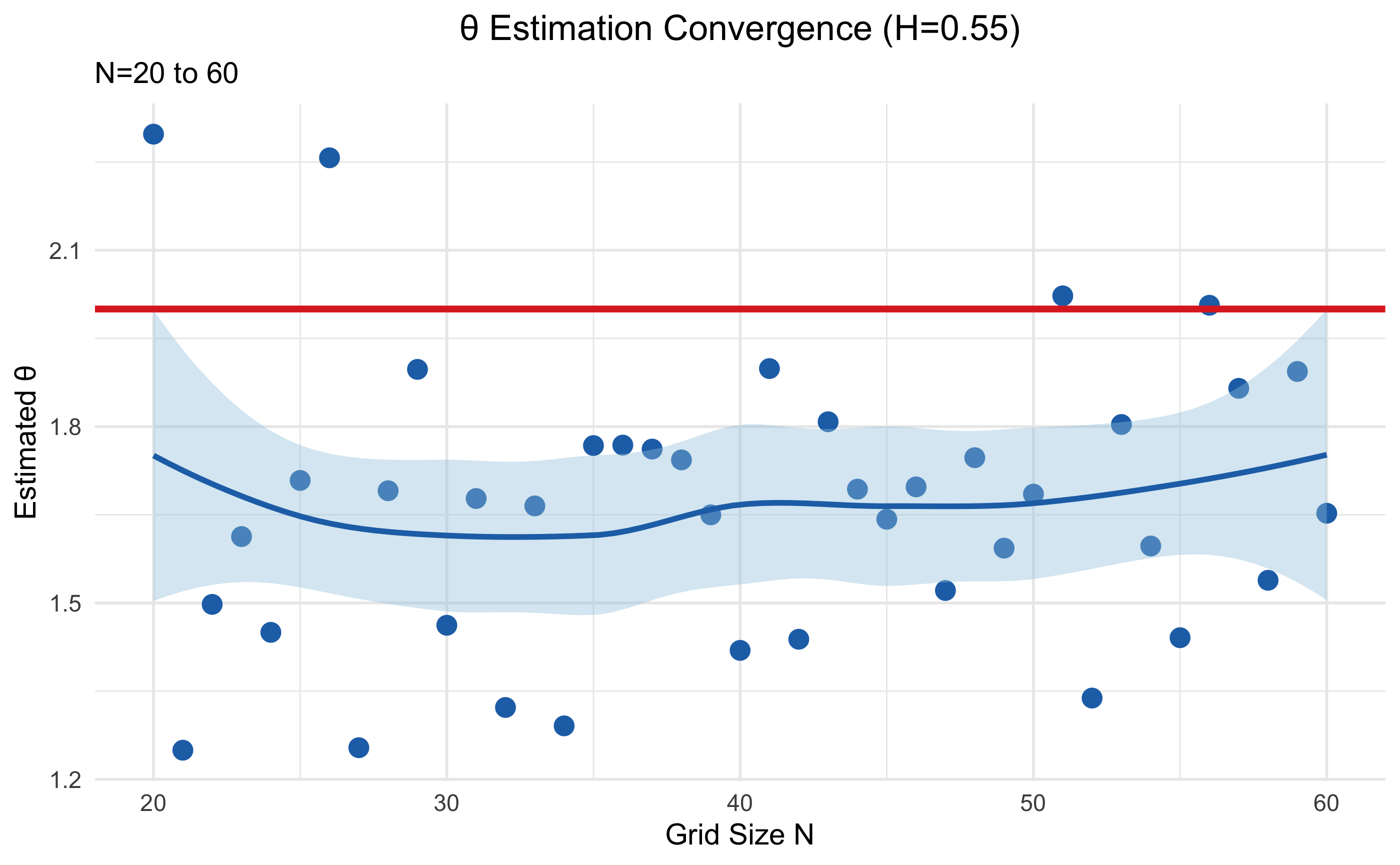}
       \caption{}
        \label{fig:left2}
    \end{minipage}
    \hfill
    \begin{minipage}[t]{0.48\textwidth}
        \centering
        \includegraphics[width=\linewidth]{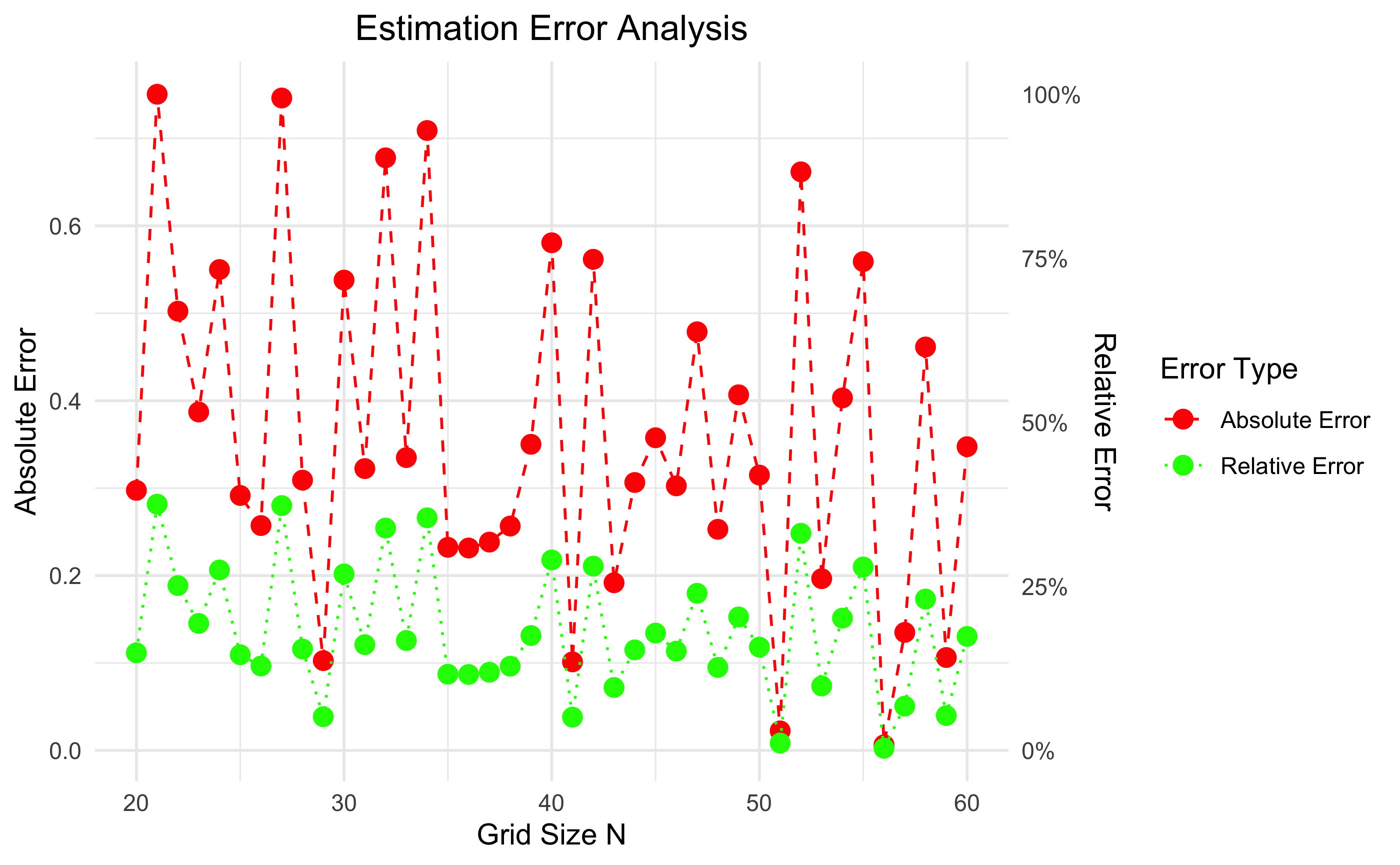}
        \caption{}
        \label{fig:right2}
    \end{minipage}
\end{figure}

 \vskip0.3cm 
 
\noindent{\bf Acknowledgments}  The research of G. Liu is  supported by the NSFC (No. 11801196). The research of R. Wang is partially supported by the NSF of Hubei Province (No. 2024AFB683) and  the Wuhan University Social Science Digital Innovation Research Team Project (No. WDSZTD2024B05).

 \vskip0.3cm 
 
 \noindent{\bf Author Contributions}  G. Liu  and R. Wang designed the inference methodology, implemented the method, conducted the simulation studies and data analyses, and drafted  the manuscript. All authors reviewed the manuscript.
\vskip0.3cm

\noindent{\bf {\large Declarations}}

\vskip0.3cm
\noindent{\bf Conflict of interest}  No potential conflict of interest was reported by the authors.

\vskip0.3cm

\vskip0.8cm

    \end{document}